\theoremstyle{definition}
\newtheorem{definition}{Definition}
\newtheorem{theorem}{Theorem}
\newtheorem{lemma}{Lemma}
\newtheorem{remark}{Remark}
\newtheorem{proposition}{Proposition}
\newtheorem{corollary}{Corollary}
\newcommand{\N}{\mathbb{N}}
\newcommand{\C}{\mathbb{C}}
\newcommand{\R}{\mathbb{R}}
\newcommand{\Z}{\mathbb{Z}}
\newcommand{\T}{\mathbb{T}}
\newcommand{\Ad}{\operatorname{Ad}}
\newcommand{\End}{\operatorname{End}}
\newcommand{\Ind}{\operatorname{Ind}}
\newcommand{\SO}{\operatorname{SO}}
\newcommand{\M}{\operatorname{M}}
\title{\textsc{ }}
\author{{Rocío Díaz Martín and Inés Pacharoni}}
\date{}
\title{\textsc{Mehler–Heine formula: a generalization in the context of spherical functions}}
\date{}
\begin{document}
\maketitle

\begin{center}
\textsc{Abstract} \\
In this article, using the notion of group contraction,  we obtain the spherical functions of the strong Gelfand pair $(\M(n),\SO(n))$ as an appropriate limit of spherical functions of the strong Gelfand pair $(\SO(n+1),\SO(n))$ and also of the strong Gelfand pair $(\SO_0(n,1),\SO(n))$. 
\end{center}

\pagestyle{headings}
\pagenumbering{arabic}


\section{Introduction and motivation}

The classic Mehler–Heine formula, introduced by Heine in 1861 and by Mehler in 1868 (who was motivated by the problem of knowing the distribution of electricity on spherical domains \cite{Mehler}), states that
the Bessel function $J_0$ is a limit of Legendre polynomials $P_N$  of order $N$
in the following sense 
\begin{equation*}
{\displaystyle \lim _{N\to \infty }P_{N}{\Bigl (}\cos ({\frac{z}{N})\Bigr )}=J_{0}(z)}, 
\end{equation*}
where the limit is uniform over $z$ in an arbitrary bounded domain in the complex plane.
Observe that the functions on the left side are  the spherical functions of the Gelfand pair $(SO(3),SO(2))$ and the function on the right side is  a spherical function of the Gelfand pair $(SO(2)\ltimes \R^2,SO(2))$ (for a reference see, for e.g., \cite{van Dijk}). 
 There is a generalization of this formula involving other classical special functions as follows
\begin{equation*}
{\displaystyle \lim _{N\to \infty }\frac{P_{N}^{\alpha ,\beta }\left(\cos ({\frac {z}{N}})\right)}{N^{\alpha }}=\frac{J_{\alpha }(z)}{\left({\frac {z}{2}}\right)^{\alpha }}~}, 
\end{equation*}
where $P_N^{\alpha,\beta}$ are the Jacobi polynomials and $J_\alpha$ is the Bessel function of first kind of order $\alpha$ (cf. \cite{Szego}). If $\alpha=\beta= \frac{n-2}{2}$, on the left side we have the Gegenbauer polynomials that are orthogonal polynomials that correspond to the spherical functions associated with the Gelfand pair $(\SO(n+1),\SO(n))$ and on the right side the function $\frac{J_{\frac{n-2}{2} }(z)}{\left({\frac {z}{2}}\right)^{\frac{n-2}{2} }}$  is a spherical function associated with the Gelfand pair $(\SO(n)\ltimes \R^n,\SO(n))$ (without normalization). We will denote by $\M(n):=\SO(n)\ltimes \R^n$ the  $n$-dimensional euclidean motion group.
\\ \\
In this article we obtain the spherical functions (scalar and matrix-valued) of the strong Gelfand pair $(\M(n),\SO(n))$ as an appropriate  limit of spherical functions (scalar and matrix-valued) of the strong Gelfand pair $(\SO(n+1),\SO(n))$ and then as an appropriate  limit of spherical functions of the strong Gelfand pair $(\SO_0(n,1),\SO(n))$, where $\SO_0(n,1)$ is connected component of the
identity of the Lorentz group. We will need the notion of group contraction introduced by Inönü and Wigner in \cite{Wigner}. For our purpose the results given by Dooley and Rice in the papers \cite{Dooley} and  \cite{Dooley2} will be extremely useful. Their results allow to show how to approximate matrix coefficients of irreducible representations of $\M(n)$ by a sequence of matrix coefficients of irreducible representations  of $\SO(n+1)$ (see \cite{Clerc}) and we will generalize this fact. 
\\ \\
The case that involves the compact group $\SO(n+1)$ is  more difficult than the case with the non compact group $\SO_0(n,1)$. Indeed, only the last section will be devoted to gain an asymptotic formula involving the spherical functions of $(\SO_0(n,1),\SO(n))$. Moreover, we can treat this case from a much more global optic,  we will work with Cartan motions groups that arise from non compact semisimple groups.
\\ \\
For the first part of this work we will follow the same writing structure as the paper \cite{Dooley} of Dooley and Rice and 
our main result is the Theorem \ref{coro} that states the following:
\\  \\
\textit{Let $(\tau,V_\tau)$ be an irreducible unitary representation of ${\SO(n)}$ and  let $\Phi^{\tau,\M(n)}$ be a spherical function of type $\tau$ of the strong Gelfand pair $(\M(n),\SO(n))$.
There exists a sequence $\{\Phi_{\ell}^{\tau, \SO(n+1)}\}_{\ell\in\Z_{\geq 0}}$ of spherical functions of type $\tau$ of the strong Gelfand pair $(\SO(n+1),\SO(n))$ and a contraction 
$\{ D_{\ell} \}_{\ell\in\Z_{\geq 0}} $ of  $\SO(n+1)$ to $\M(n)$  such that
\begin{equation*}
\lim_{\ell\rightarrow \infty} \Phi_\ell^{\tau, \SO(n+1)}\circ D_{\ell} =\Phi^{\tau, \M(n)},
\end{equation*}
where the convergence is point-wise on  $V_\tau$ and uniform on compact sets of $\M(n)$. }
\\ \\
In the last section we obtain an analogous result changing $\SO(n+1)$ by $\SO_0(n,1)$. 
\\ \\

\large{\textsc{\textbf{Acknowledgements:}}} To Fulvio Ricci who had the first idea.

\section{Preliminaries}



\subsection{Spherical functions}
Let $(G,K,\tau)$ be a triple where $G$ is a locally compact Hausdorff unimodular topological
group (or just a Lie group),  $K$ be a compact subgroup of $G$ and $(\tau,V_\tau)$ be an irreducible unitary representation of $K$ of dimension $d_\tau$. We denote by $\chi_\tau$ the character associated to $\tau$, by $\End(V_\tau)$ the group of endomorphisms of the vector space $V_\tau$ and by
 $\widehat{G}$ (respectively, $\widehat{K}$) the set of equivalence classes of 
irreducible unitary representations of $G$ (respectively, of $K$). We assume that for each $\pi\in\widehat{G}$, the multiplicity $m(\tau,\pi)$ of $\tau$ in $\pi_{|_{K}}$ is at most one. In these cases the triple $(G,K,\tau)$ is said  \textit{commutative} because the convolution algebra of $\End(V_\tau)$-valued integrable functions on $G$ such that are bi-$\tau$-equivariant (i.e., $f(k_1gk_2)=\tau(k_2)^{-1}f(g)\tau(k_1)^{-1}$ for all $g\in G$ and for all $k_2,k_2\in K$) turns out to be commutative. When $\tau$ is the trivial representation we have the notion of \textit{Gelfand pair}. It is said that $(G,K)$ is a \textit{strong Gelfand pair} if $(G,K,\tau)$ is a commutative triple for every $\tau\in\widehat{K}$.
\\ \\
Let $\widehat{G}(\tau)$ be the set of those representations $\pi\in\widehat{G}$ which contain $\tau$ upon restriction to $K$. For $\pi\in\widehat{G}(\tau)$, let $\mathcal{H}_\pi$ be the Hilbert space where $\pi$ acts and  let $\mathcal{H}_\pi(\tau)$ be the subspace of vectors which transforms under $K$ according to $\tau$. Since $m(\tau,\pi)=1$, $\mathcal{H}_\pi(\tau)$ can be identified with $V_\tau$. Let $P_\pi^\tau:\mathcal{H}_\pi\longrightarrow\mathcal{H}_\pi(\tau)$ be the orthogonal projection (see, e.g., \cite[Proposition 5.3.7]{Wallach} and \cite[Section 3]{Camporesi}) given by
\begin{equation}\label{projection}
P_\pi^\tau=d_\tau\pi_{|_{K}}(\overline{\chi_\tau})=d_\tau\int_K {\chi_\tau(k^{-1})}\pi(k)dk.
\end{equation}
\begin{definition}\label{def spherical function of type tau}
Let $\pi\in\widehat{G}$. The function 
\begin{equation*} 
\Phi_\pi^\tau(g):=P_\pi^\tau \circ \pi(g)\circ P_\pi^\tau \qquad (\forall g\in G)
\end{equation*}
is called a \textit{spherical function of type $\tau$}.
\end{definition}

\begin{remark} \label{clases de conj de func esf} \
\begin{itemize}
\item[$(i)$] Observe that the spherical functions depend only on the classes of equivalence of irreducible unitary representations of $G$. That is, if $\pi_1$ y $ \pi_2$ are two equivalent irreducible unitary representations of $G$ with intertwining operator $A:\mathcal{H}_{\pi_1}\longrightarrow\mathcal{H}_{\pi_2}$ (i.e., $A\circ {\pi_1}(g)\circ A^{-1}=\pi_2(g)$ for all $g\in G$), then 
$A\circ P_{\pi_{1}}^\tau\circ A^{-1}= P_{\pi_2}^\tau$ and so
\begin{equation*}
A \circ \Phi_{\pi_1}^\tau (g) \circ A^{-1}  \ =  \ \Phi_{\pi_2}^\tau (g) \qquad \forall g\in G.
\end{equation*}
As a result, $\Phi_{\pi_1}^\tau (g)$ and $\Phi_{\pi_2}^\tau (g)$ are conjugated by the same isomorphism $A$ for all $g\in G$.    
\item[$(ii)$] Apart from that, as we say before, given $\pi\in \widehat{G}$ such that $\tau\subset\pi$ as $K$-module and $m(\tau,\pi)=1$, the vector space $\mathcal{H}_\pi(\tau)$ is isomorphic to $V_\tau$. If $T:\mathcal{H}_\pi(\tau)\longrightarrow V_\tau$ is the isomorphism between them, we will not make distinctions between $\Phi_\pi^\tau(g)\in \End(\mathcal{H}_\pi(\tau))$ and  $T\circ\Phi_\pi^\tau(g)\circ T^{-1}\in \End(V_\tau)$.   \end{itemize}
\end{remark}

\noindent In this work we consider the strong Gelfand pairs $(\M(n),\SO(n))$, $(\SO(n+1),\SO(n))$ and $(\SO_0(n,1),\SO(n))$. For a reference see for e.g \cite{Fulvio, Nosotros} for the first pair, \cite{Ignacio 1, Ignacio 2} for the second pair and \cite{Camporesi} for the third pair. 
\\ \\
The natural action of $\SO(n)$ on $\R^n$ will be denote by 
\begin{gather*}
\SO(N)\times \R^N\longrightarrow \R^N\\
(k,x)\mapsto k\cdot x.
\end{gather*}

\noindent From now on 
we will denote by $K$ the group isomorphic to $\SO(n)$ which is, depending on the context, a subgroup of $\SO(n+1)$ or a subgroup of $\M(n)$. In the first case it must be identified with $\{g\in \SO(n+1) | \ g\cdot e_1=e_1\}$ (where $e_1$ is the canonical vector $(1,0,...,0)\in\R^{n+1}$) and in the second with $\SO(n)\times\{0\}$.


\subsection{The representation theory of $\SO(N)$}\label{rep SO(n+1)}

Let $N$ be an arbitrary natural number. The Lie algebra $\mathfrak{so}(N)$ of $\SO(N)$ is the space of antisymmetric matrices of order $N$. Its complexification $\mathfrak{so}(N,\C)$ is the space of complex such matrices. Let $M$ be the integral part of $N/2$.
For a maximal torus 
$\T$ of $\SO(2M)$ we consider 
\begin{small}
\begin{equation*}
\left\lbrace \begin{pmatrix}  
  \cos(\theta_1)& \sin(\theta_1)   \\
  -\sin(\theta_1) & \cos(\theta_1)      \\
       &      & \ddots &\\
        &      &        &  \cos(\theta_M)& \sin(\theta_M)  \\
    &      &        & -\sin(\theta_M) & \cos(\theta_M) 
\end{pmatrix} | \ \theta_1, ..., \theta_M \in \R \right\rbrace
 \end{equation*}
\end{small}
\noindent   and for $\SO(2M+1)$ the same but   with a one in the right bottom corner. 
 In what follows we describe the basic notions of the root system of  $\mathfrak{so}(N,\C)$, following \cite{Knapp,  Fulton y Harris}, in order to fix notation.


 \noindent Let $\mathfrak{t}$ denote the Lie algebra of $\T$. A Cartan subalgebra $\mathfrak{h}$ of the complex Lie algebra $\mathfrak{so}(N,\C)$ is given by the complexification of $\mathfrak{t}$. 
 If $N$ is even we consider $\{H_1,...,H_M\}$ the following basis of $\mathfrak{h}$ as a $\C$-vector space
\begin{small}
\begin{equation*}
\left\lbrace H_1:=
\begin{pmatrix}  
  0& i  & \\
  -i & 0      \\
     
     &        &    \ddots &\\
          &    &              & 0&0\\
          &    &            & 0&0
\end{pmatrix},
\text{ ... } , H_M:=
\begin{pmatrix}  
  0& 0   \\
  0 & 0      \\
                &    &    \ddots &\\
 &           & &  0 & i \\
&            & &  -i & 0
\end{pmatrix}\right\rbrace 
\end{equation*}
     
\end{small} 
\noindent  (where $i=\sqrt{-1}$) and if $N$ is odd we consider the same but   with a zero in the right bottom corner. This basis is \textit{orthogonal} with respect to the Killing form $B$, that is 
\begin{gather*}
B(H_i,H_j)=0 \quad \forall i\neq j \quad \text{and}\\
B(H_i,H_i)=\begin{cases}
			4(M-1) & \text{(if } N \text{ is even)}\\
            4(M-1)+2 & \text{(if } N \text{ is odd)}.
			\end{cases}
\end{gather*}
Let $\mathfrak{h}^*$ be the dual space of $\mathfrak{h}$ and let $\{L_1,...,L_M\}$ be the dual basis of $\{H_1,...,H_M\}$ (that is, $L_i(H_j)=\delta_{i,j}$, where $\delta_{i,j}$ is the Kronecker delta).
To each irreducible representation of $\mathfrak{so}(N,\C)$ corresponds its highest weight 
$\lambda=\sum_{i=1}^M \lambda_iL_i$, where $\lambda_i$ are all integers or all half integers satisfying 
\begin{itemize}
\item[$(i)$] $\lambda_1\geq\lambda_2\geq  \ ... \ \geq\lambda_{M-1}\geq|\lambda_M|$ if $N$ is even or
\item[$(ii)$] $\lambda_1\geq\lambda_2\geq ... \geq\lambda_M\geq 0$ if $N$ is odd.
\end{itemize}
 Thus, we can associate each irreducible representation of $\mathfrak{so}(N,\C)$ with an $M$-tuple  $(\lambda_1,...,\lambda_M)$ fulfilling the mentioned conditions. We call such tuple a \textit{partition}. 
  \\ \\
  We recall a well known formula regarding the decomposition of a representation of $\mathfrak{so}(N,\C)$ under its restriction to $\mathfrak{so}(N-1,\C)$ (cf. \cite[(25.34) and (25.35)]{Fulton y Harris}).
\begin{itemize}
\item[] 
\underline{Case odd to even:}
Let $\rho_\lambda$ be the irreducible representation of $\mathfrak{so}(2M+1,\C)$ that is in correspondence with the partition $\lambda=(\lambda_1,...,\lambda_M)$ where $\lambda_1\geq\lambda_2\geq ... \geq\lambda_M\geq 0$. Then,
\begin{equation}\label{decomp so restr caso imp}
(\rho_\lambda)_{|_{\mathfrak{so}(2M,\C)}} = \bigoplus_{\overline{\lambda}} \rho_{\overline{\lambda}}
\end{equation}
where the sum runs over all the partitions $\overline{\lambda}=(\overline{\lambda_1},...,\overline{\lambda_{M}})$ that satisfy
$$\lambda_1\geq\overline{\lambda_1}\geq\lambda_2\geq\overline{\lambda_2}\geq ... \geq \overline{\lambda_{M-1}}\geq\lambda_M\geq|\overline{\lambda_M}|,$$
with the $\lambda_i$ and $\overline{\lambda_i}$ simultaneously all integers or all half integers.
\item[] \underline{Case even to odd:}
Let $\rho_\lambda$ be the irreducible representation of $\mathfrak{so}(2M)$ that is in correspondence with the partition $\lambda=(\lambda_1,...,\lambda_M)$ where $\lambda_1\geq\lambda_2\geq  \ ... \ \geq\lambda_{M-1}\geq|\lambda_M|$). Then,
\begin{equation}\label{decomp so restr caso par}
(\rho_\lambda)_{|_{\mathfrak{so}(2M-1,\C)}} = \bigoplus_{\overline{\lambda}} \rho_{\overline{\lambda}}
\end{equation}
where the sum runs over all the partitions $\overline{\lambda}=(\overline{\lambda_1},...,\overline{\lambda_{M-1}})$ that satisfy
$$\lambda_1\geq\overline{\lambda_1}\geq\lambda_2\geq\overline{\lambda_2}\geq \ ... \ \geq \overline{\lambda_{M-1}}\geq|\lambda_M|,$$
with the $\lambda_i$ and $\overline{\lambda_i}$ simultaneously all integers or all half integers.
\end{itemize}
 
\noindent Finally, we recall that  each irreducible representation of the group $\SO(N)$ corresponds to a partition  $(\lambda_1,...,\lambda_M)$ (with the properties $(i)$ or $(ii)$ depending on the parity of its dimension) where $\lambda_i$ are all integers. 

\subsubsection{The Borel-Weil-Bott Theorem}

The Borel-Weil-Bott theorem provides a concrete model for irreducible representations of the rotation group, since it is a compact Lie group. Let $\T$ be a maximal torus of $\SO(N)$. Given a character $\chi$ of $\T$, $\SO(N)$ acts on the space of holomorphic sections of the line bundle $G\times_{\chi}\C$ by the left regular representation. This representation is either zero or irreducible, moreover, it is irreducible when $\chi$ is dominant integral. 
The theorem asserts that each irreducible representation of $\SO(N)$ arises from this way for a unique character $\chi$ of the maximal torus $\T$. (For a reference see \cite[Section 6.3]{Wallach} and \cite[Section 1]{Dooley}.)

\begin{remark}\label{remark holomorphic sections}
The holomorphic sections of the line bundle $G\times_{\chi}\C$ may be identified with $C^\infty$ functions on $\SO(N)$ satisfying the following two conditions:
\begin{enumerate}
\item[$(i)$] $f(gt)=\overline{\chi(t)}f(g) \quad \forall t\in \T \text{ and } g\in \SO(N)$ and
\item[$(i)$] for each $X\in\mathfrak{\eta}^+, \quad Xf(g):=\frac{d}{ds}_{|_{s=0}}f(g \ \exp(sX))=0 \quad \forall g\in \SO(N)$. 
\end{enumerate}
With this identification, the representation of $\SO(N)$ is given by the left regular action, i.e., 
 $L_g(f)(x):=f(g^{-1}x)$ $ \ \forall g \in \SO(N)$.
\end{remark}


\subsubsection{A special character and a special function}\label{A special character and a special function}

For the case $N=n+1$ we will introduce a character $\gamma$ and a function $\psi$ that will play an important role later on. Let $m$ be the integral part of $(n+1)/2$ and let $\T^m$ denote a maximal torus of $\SO(n+1)$ like at the beginning of this section. 
\\ \\
Let $\gamma:\T^m\longrightarrow \C$ be the projection onto the first factor, i.e.,  $\gamma(e^{i\theta_1},...,e^{i\theta_m})=e^{i\theta_1}$. 
The irreducible representation of $\SO(n+1)$ associated with $\gamma$ (through the Borel-Weil-Bott theorem) is equivalent to the standard representation \cite[Lemma 1]{Dooley}. Moreover, for each $\ell\in \N$, the irreducible representation of $\SO(n+1)$ associated with the $\ell$-th power of  $\gamma$ (i.e. $\gamma^\ell(e^{i\theta_1},...,e^{i\theta_m})=e^{i\ell\theta_1}$) has $(\ell,0,...,0)$, that is,  the one that can be realized on the space of harmonic homogeneous polynomials of degree $\ell$ on $\R^{n+1}$ with complex coefficients.
\\ \\
In the standard representation of $\SO(n+1)$ appears
the trivial representation of $K$   as a $K$-submodule. As a consequence, we can take a $K$-fixed vector for the  standard representation, i.e., a function $\psi:\SO(n+1)\longrightarrow \C$ as in  the Remark \ref{remark holomorphic sections} satisfying $\psi(k^{-1}g)=\psi(g)$ for all $k\in K$ and $g\in \SO(n+1)$. Moreover, we can choose $\psi$ such that $\psi(k)=1$ for all $k\in K$.


\subsection{The representation theory of $\M(n)$}\label{rep M(n)}

We will follow the Mackey's orbital analysis to describe the irreducible representations of $\M(n)$ (for a reference see \cite[Section 14]{Mackey} and \cite[Section 2]{Dooley}). The orbits of the natural action of $\SO(n)$ on $\R^n$ are the spheres of radius $R>0$ and the origin set point $\{0\}$ (which is a fixed point for the whole group $\SO(n)$). The irreducible representations corresponding to the trivial orbit $\{0\}$ are one-dimensional, parametrized by $\lambda\in \R^n$ and explicitly they are 
\begin{equation}\label{rep de Mn de med zero}
(k,x)\mapsto e^{i\langle\lambda,x\rangle} \qquad \forall (k,x)
\in \M(n),
\end{equation}
where $\langle\cdot,\cdot\rangle$ denotes the canonical inner product on $\R^n$. 
Since these representations have zero Plancherel measure we are not interested in. (They will not provide spherical functions appearing in the inversion formula for the spherical Fourier transform.)
\\ \\
The irreducible representations that arises from the non trivial orbits will be more interesting for us. One must  fix a point $R e_1$  on the sphere of radius $R>0$ (where $e_1:=(1,0,...,0)\in\R^n$), take its stabilizer \begin{equation*}
K_{R e_1}:=\{k\in \SO(n)| \ k\cdot Re_1=Re_1\},
\end{equation*} 
the character
\begin{equation*}
\chi_R (x):=e^{iR\langle x,e_1\rangle} \qquad (\forall x\in\R^n).
\end{equation*}
and a representation $\sigma\in\widehat{\SO(n-1)}$ (note that $K_{Re_1}$ is isomorphic to $SO(n-1)$). Finally, inducing the representation $\sigma\otimes\chi_R$ from $K_{R e_1}\ltimes \R^n$ to $\M(n)$ one obtains an irreducible representation $\omega_{\sigma,R}$ of $\M(n)$ . 
\\ \\
It can be view (using the Borel-Weil-Bott model for $\sigma\in \widehat{\SO(n-1)}$) that this representation can be realized on a subspace of scalar-valued square integrable functions on $\SO(n)$. This space consists of the functions $f\in L^2(\SO(n))$ that satisfy the following two conditions,
\begin{itemize}
\item[(i)] if $\mathbb{T}^{m-1}$ denotes the maximal torus of $K_{R e_1}\simeq \SO(n-1)$, then
\begin{equation*}
f(kt)=\chi_{\sigma}(t^{-1})f(k) \qquad \forall k\in \SO(n) \text{ and } \forall t\in \mathbb{T}^{m-1},
\end{equation*}
where $\chi_\sigma$ is the character associated  with $\sigma$;
\item[(ii)] for each $k\in \SO(n)$, the function
\begin{gather*}
\SO(n-1)\longrightarrow \C \\
\tilde{k}\mapsto f(k\tilde{k})
\end{gather*}
satisfies condition $(ii)$ from Remark \ref{remark holomorphic sections} (with $N=n-1$).
\end{itemize}
 We denote this space as $\mathcal{H}_{\sigma,R}$. The irreducible representation $\omega_{\sigma, R}$ acts on $\mathcal{H}_{\sigma,R}$ in the following way, let $f\in \mathcal{H}_{\sigma,R}$ and let $(k,x)\in \SO(n)\times \R^n$, then
\begin{equation}\label{principal serie}
(\omega_{\sigma, R}(k,x)(f))(h)=e^{iR\langle h^{-1}\cdot x,e_1\rangle}f(k^{-1}h) \qquad (\forall h\in \SO(n)).
\end{equation}
It is known that all the irreducible representations of $\M(n)$ are equivalent to the ones given by (\ref{rep de Mn de med zero}) or to the ones given by (\ref{principal serie}).


\subsection{Contraction}

The notion of group contraction was introduced Inönü and Wigner in \cite{Wigner}. We recall its definition (cf. \cite[p. 211]{Ricci contraction}).
\begin{definition}\label{def contraction}
If $G$ and $H$ are two connected Lie groups of the same dimension, we say that the family 
$\{D_{\alpha}\}$ of infinitely differentiable maps
$D_\alpha:H\longrightarrow G$, mapping the identity $e_H$ to the identity $e_G$ of $G$, defines a \textit{contraction of $G$ to $H$} if, given any relatively compact open neighborhood $V$ of $e_H$ 
\begin{enumerate}
\item[$(i)$] there is $\alpha_{V}\in \N$ such that for $\alpha>\alpha_V$, ${(D_{\alpha})}_{|_{V}}$ is a diffeomorphism,
\item[$(ii)$] if $W$ is such that $W^2\subset V$ and $\alpha>\alpha_V$, then $D_{\alpha}(W)^2\subset D_{\alpha}(V)$ and
\item[$(iii)$] for $h_1, h_2\in W$, 
\begin{equation*}
\lim\limits_{\alpha \to \infty} D_\alpha^{-1}\left( D_\alpha(h_1) D_\alpha(h_2)^{-1} \right)=h_1 h_2^{-1}  	
\end{equation*}		
	uniformly on $V\times V$.
\end{enumerate}
\end{definition}

\noindent In particular, for $G=\SO(n+1)$ and $H=\M(n)$ we consider the following family of contraction maps $\{D_\alpha\}_{\alpha\in\R_{>0}}$, 
\begin{equation} \label{contaction}
\begin{split}
D_\alpha: \M(n)\longrightarrow \SO(n+1) \\
D_{\alpha}(k,x):= \exp\left(\frac{x}{\alpha}\right) \ k,
\end{split}
\end{equation}
where $\exp$ denotes the exponential map $\mathfrak{so}(n+1)\longrightarrow \SO(n+1)$ and we identified (as vector spaces) $\R^n$ with the complement of $\mathfrak{so}(n)$ 
on $\mathfrak{so}(n+1)$, which is invariant under the adjoint action of $K$. (Note that we are using the so called Cartan decomposition.) 
Writing
\begin{align*}
D_\alpha(k_1,x_1)D_\alpha(k_2,x_2)&=  \exp\left(\frac{1}{\alpha}x_1\right) \ k_1 \ \exp\left(\frac{1}{\alpha}x_2\right) \ k_2 \\
&=     \exp\left(\frac{1}{\alpha}x_1\right)  \left[k_1 \ \exp\left(\frac{1}{\alpha}x_2\right) k_1^{-1}\right]k_1 \ k_2\\ 
&=    \exp\left(\frac{1}{\alpha}x_1\right)  \ \exp\left(\Ad(k_1)\frac{1}{\alpha}x_2\right) \ k_2
\end{align*}
(where $\Ad$ denotes the adjoint representation of $\SO(n)$) and using the Bake-Campbell-Hausdorff formula we can derive, at the limit of $\alpha\rightarrow \infty$, the property $(iii)$ for all $(k_1,x_1), (k_2,x_2)\in \M(n)$.


\subsection{The contracting sequence of an irreducible representation of $\M(n)$}\label{sec rdos de Dooley}
In this section we summarize the results proved by Dooley and Rice in \cite[Sections 3 and 4]{Dooley} that will be frequently used in the sequel.  
\\ \\
Let $R\in\R_{>0}$, let $\sigma\in\widehat{\SO(n-1)}$ corresponding to the  partition $(\sigma_1,...,\sigma_{m-1})$ and  let $\omega_{\sigma,R}\in\widehat{\M(n)}$  be the  irreducible unitary representation given by (\ref{principal serie}). Finally, let $\gamma$ be the character given in Section \ref{A special character and a special function}. The following definition  will be very important.

\begin{definition} \cite[Definition 4]{Dooley}
The sequence 
$\{\gamma^\ell\chi_\sigma\}_{\ell=1}^{\infty}$ of characters of $\T^m$ defines, for $\ell\geq\sigma_1$, a sequence $\{\rho_{\sigma,\ell}\}_\ell$ of  irreducible unitary representations of $\SO(n+1)$  (as in Section \ref{rep SO(n+1)}) and it is called the \textit{contracting sequence} associated with $\omega_{\sigma,R}$. For each non negative integer $\ell\geq\sigma_1$, we denote by $\mathcal{H}_{\sigma,\ell}$ the space given by Remark \ref{remark holomorphic sections}, which is a model for $\rho_{\sigma,\ell}$.
\end{definition}
\noindent 
We will use the following results proved by Dooley and Rice.
\begin{lemma}\cite[Lemma 5]{Dooley}\
\begin{itemize}
    \item For each $\ell\in{\N}$, the multiplication by the function $\psi$ (given in Section \ref{A special character and a special function})  defines a linear map from $\mathcal{H}_{\sigma,\ell}$ to $\mathcal{H}_{\sigma,\ell+1}$.
    \item If $\tilde{f}\in \mathcal{H}_{\sigma,\ell}$, then the restrictions of $\tilde{f}$ and $\psi \tilde{f}\in \mathcal{H}_{\sigma,\ell+1}$ to $\SO(n)$ are the same (since $\psi_{|_{\SO(n)}}\equiv 1$).
    \item The spaces $\{{\mathcal{H}_{\sigma,\ell}}_{|_{\SO(n)}}\}_{\ell\in{\N}}$ of restrictions to $\SO(n)$ form an increasing sequence of subspaces of $\mathcal{H}_{\sigma,R}$.
\end{itemize}
\end{lemma}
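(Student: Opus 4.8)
The plan is to verify the three bullets in order, all of which hinge on the explicit description of $\mathcal{H}_{\sigma,\ell}$ coming from Remark \ref{remark holomorphic sections} together with the two defining properties of $\psi$ recorded at the end of Section \ref{A special character and a special function}: namely that $\psi$ is a $K$-fixed vector for the standard representation of $\SO(n+1)$ (so it lies in $\mathcal{H}_{\gamma}$, satisfies $\psi(gt)=\overline{\gamma(t)}\,\psi(g)$ and $X\psi=0$ for $X\in\eta^{+}$) and that $\psi|_{\SO(n)}\equiv 1$.

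First I would prove that multiplication by $\psi$ sends $\mathcal{H}_{\sigma,\ell}$ into $\mathcal{H}_{\sigma,\ell+1}$. Given $\tilde f\in\mathcal{H}_{\sigma,\ell}$, which by definition transforms under the torus $\T^{m}$ according to $\overline{\gamma^{\ell}\chi_\sigma}$ and is annihilated by all raising operators $X\in\eta^{+}$, I compute the torus-covariance of the product: $(\psi\tilde f)(gt)=\psi(gt)\tilde f(gt)=\overline{\gamma(t)}\,\overline{\gamma^{\ell}\chi_\sigma(t)}\,\psi(g)\tilde f(g)=\overline{\gamma^{\ell+1}\chi_\sigma(t)}\,(\psi\tilde f)(g)$, which is precisely condition $(i)$ for $\mathcal{H}_{\sigma,\ell+1}$. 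For the holomorphy condition $(ii)$, since $X$ acts as a first-order differential operator one has the Leibniz rule $X(\psi\tilde f)=(X\psi)\tilde f+\psi\,(X\tilde f)=0$ because both $X\psi=0$ and $X\tilde f=0$ for $X\in\eta^{+}$; smoothness of the product is clear. Hence $\psi\tilde f\in\mathcal{H}_{\sigma,\ell+1}$, and the map is visibly linear.

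The second bullet is then immediate: for $k\in\SO(n)=K$ we have $(\psi\tilde f)(k)=\psi(k)\tilde f(k)=\tilde f(k)$ since $\psi|_{K}\equiv 1$, so $\tilde f$ and $\psi\tilde f$ have the same restriction to $\SO(n)$. For the third bullet, the restriction map $\mathcal{H}_{\sigma,\ell}\to L^{2}(\SO(n))$ lands inside $\mathcal{H}_{\sigma,R}$ because the defining conditions of $\mathcal{H}_{\sigma,R}$ in Section \ref{rep M(n)} — the $\T^{m-1}$-covariance by $\chi_\sigma$ and the holomorphy with respect to $\eta^{+}$ for $\mathfrak{so}(n-1)$ — are exactly what the conditions defining $\mathcal{H}_{\sigma,\ell}$ impose after restriction to $\SO(n)$, the extra $\gamma^{\ell}$-factor being trivial on the subtorus $\T^{m-1}\subset K_{Re_1}$ (one should check here that $\gamma$ is indeed trivial on that subtorus, which follows from the embedding conventions fixed earlier). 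Combining this with the second bullet, $\mathcal{H}_{\sigma,\ell}|_{\SO(n)}\subseteq\mathcal{H}_{\sigma,\ell+1}|_{\SO(n)}$ inside $\mathcal{H}_{\sigma,R}$, giving the claimed increasing chain.

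The only genuinely delicate point is the bookkeeping with the tori and raising operators: one must be careful that the $\eta^{+}$ used to define holomorphy for $\mathcal{H}_{\sigma,\ell}$ (a nilpotent subalgebra for $\SO(n+1)$ relative to $\T^{m}$) restricts compatibly to the $\eta^{+}$ for $\mathfrak{so}(n-1,\C)$ appearing in the description of $\mathcal{H}_{\sigma,R}$, and that $\psi$, being a section pulled back from the standard representation, really does satisfy $X\psi=0$ for all the relevant $X$. Once the conventions from Sections \ref{rep SO(n+1)}, \ref{A special character and a special function} and \ref{rep M(n)} are lined up, each of the three assertions reduces to the one-line computations above; the Leibniz rule for the first-order operators $X\in\eta^{+}$ is the conceptual heart of the argument.
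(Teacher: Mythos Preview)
The paper does not give its own proof of this lemma: it is simply quoted from \cite[Lemma 5]{Dooley} and stated without argument. Your proposal supplies exactly the expected verification, and it is correct: the torus-covariance check and the Leibniz-rule computation for $X\in\eta^{+}$ are precisely how one shows $\psi\tilde f\in\mathcal{H}_{\sigma,\ell+1}$, the second bullet is a one-line consequence of $\psi|_{K}\equiv 1$, and your observation that $\gamma$ is trivial on $\T^{m-1}$ (so the conditions defining $\mathcal{H}_{\sigma,\ell}$ specialise, after restriction to $\SO(n)$, to those defining $\mathcal{H}_{\sigma,R}$) is the right way to handle the third bullet. This is the standard argument one would find in the Dooley--Rice source; there is nothing to compare against in the present paper.
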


\begin{theorem}\cite[Theorem 1 and Corollary 1]{Dooley}.
Let $\psi^\ell$ denote the $\ell$-th power of $\psi$ (i.e, $\psi^\ell=\psi\circ ... \circ \psi$). Let $B$ be a compact subset of $\R^n$. For an arbitrary function $\tilde{f} \in\mathcal{H}_{\sigma,\ell_0}$, it follows that 
	\begin{enumerate}
	\item[$(i)$]  for all $s\in	\SO(n)$, 
	\begin{equation}\label{Theorem 1 Dooley}
	\lim\limits_{\ell \to \infty}  \left( 	\rho_{\sigma,\ell_0+\ell} (D_{\ell/R}(k,x)) ( \psi^\ell \tilde{f} ) 	\right) (s) = \left( \omega_{\sigma,R}(k,x)	(\tilde{f}_{|_{\SO(n)}}) \right)(s)
	\end{equation}		
	uniformly for $(k,x)\in \SO(n)\times B$;
	\item[$(ii)$]  and also, 
    \begin{equation}\label{Corollary 1 Dooley}
	\lim\limits_{\ell \to \infty}   	\left\|\rho_{\sigma,\ell_0+\ell} (D_{\ell/R}(k,x)) ( \psi^\ell \tilde{f} ) 	- \omega_{\sigma,R}(k,x)	
	(\tilde{f}_{|_{\SO(n)}}) \right\|_{L^2(\SO(n))} = 0
	\end{equation}
	uniformly for $(k,x)\in \SO(n)\times B$.
	\end{enumerate}
\end{theorem}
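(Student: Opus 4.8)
The statement is the Dooley--Rice convergence theorem, and the plan is to unwind the Borel--Weil realization of $\rho_{\sigma,\ell_0+\ell}$ into an explicit evaluation formula and then reduce everything to the elementary limit $(1+z/\ell)^{\ell}\to e^{z}$. First I would reduce to a statement about the function $\psi$ alone. By Remark~\ref{remark holomorphic sections} the representation $\rho_{\sigma,\ell_0+\ell}$ acts on $\mathcal{H}_{\sigma,\ell_0+\ell}$ by the left regular action $L_g F=F(g^{-1}\,\cdot\,)$, and since $D_{\ell/R}(k,x)=\exp(\tfrac R\ell x)\,k$ we get, for $s\in\SO(n)$,
\[
\bigl(\rho_{\sigma,\ell_0+\ell}(D_{\ell/R}(k,x))(\psi^{\ell}\tilde f)\bigr)(s)
=\psi\bigl(\exp(-\tfrac R\ell x)\,s\bigr)^{\ell}\;\tilde f\bigl(k^{-1}\exp(-\tfrac R\ell x)\,s\bigr),
\]
because $\psi^{\ell}\tilde f$ is the ordinary pointwise product (Lemma~5 of \cite{Dooley}) and $\psi$ is left $K$-invariant, so the $k^{-1}$ drops out inside $\psi$. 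By (\ref{principal serie}) the target value is $e^{iR\langle s^{-1}\cdot x,e_1\rangle}\tilde f(k^{-1}s)$. Hence it suffices to prove, uniformly for $x$ in the compact set $B$, for $k\in\SO(n)$, and --- crucially for part $(ii)$ --- also for $s\in\SO(n)$, the two limits
\[
\tilde f\bigl(k^{-1}\exp(-\tfrac R\ell x)\,s\bigr)\longrightarrow\tilde f(k^{-1}s)
\quad\text{and}\quad
\psi\bigl(\exp(-\tfrac R\ell x)\,s\bigr)^{\ell}\longrightarrow e^{iR\langle s^{-1}\cdot x,e_1\rangle}.
\]

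The first limit is immediate: $\tilde f$ is smooth on the compact group $\SO(n+1)$, hence uniformly continuous for a bi-invariant metric, and $\exp(-\tfrac R\ell x)\to e$ uniformly for $x\in B$, so $k^{-1}\exp(-\tfrac R\ell x)\,s$ tends to $k^{-1}s$ uniformly in all three variables. The second limit is the heart of the matter. Here I would use \cite[Lemma~1]{Dooley}, which identifies the representation attached to $\gamma$ with the standard representation of $\SO(n+1)$ on $\C^{n+1}$; carrying the normalized $K$-fixed function $\psi$ of Section~\ref{A special character and a special function} through this identification produces the closed form $\psi(h)=\langle h^{-1}e_1,\,e_1+ie_2\rangle$, where $\langle\cdot,\cdot\rangle$ is the Hermitian product on $\C^{n+1}$, $e_1,e_2$ are the first two coordinate vectors, and the normalization $\psi|_{K}\equiv 1$ is automatic. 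Writing $Y_x$ for the element of the Cartan complement $\mathfrak p$ of $\mathfrak{so}(n)$ in $\mathfrak{so}(n+1)$ corresponding to $x\in\R^n$, and using that $s\in K$ fixes $e_1$ and rotates the span of $e_2,\dots,e_{n+1}$, a first-order expansion of the inner product gives
\[
\psi\bigl(\exp(tY_x)\,s\bigr)=1-it\,\langle s^{-1}\cdot x,e_1\rangle+O(t^{2}),
\]
the remainder being bounded uniformly in $x\in B$ and $s\in\SO(n)$ by smoothness of $\psi$ on the compact group (the sign being forced by comparison with (\ref{principal serie})).

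Setting $t=-R/\ell$, the linear coefficient is purely imaginary, so $\bigl|\psi(\exp(-\tfrac R\ell x)\,s)\bigr|=1+O(\ell^{-2})$ and the powers $\psi(\exp(-\tfrac R\ell x)\,s)^{\ell}$ stay bounded; passing to the logarithm for $\ell$ large, $\ell\log\psi(\exp(-\tfrac R\ell x)\,s)\to iR\langle s^{-1}\cdot x,e_1\rangle$ uniformly, and exponentiating gives the second limit. Since both factors in the displayed product are bounded and converge uniformly, the product converges to the product of the limits, which proves $(i)$ --- in fact with uniformity in $s$ as well. Part $(ii)$ is then a soft consequence: $\SO(n)$ carries a finite Haar measure, so uniform convergence in $s$ on $\SO(n)$ forces convergence in $L^{2}(\SO(n))$ at a rate controlled only by $(k,x)$, which is exactly (\ref{Corollary 1 Dooley}).

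The one genuinely delicate point is the first-order analysis of $\psi$ along $\mathfrak p$, that is, getting $\tfrac{d}{dt}\big|_{0}\psi(\exp(tY_x)\,s)=-i\langle s^{-1}\cdot x,e_1\rangle$ right --- constant and sign included --- since this is precisely the computation that converts the exponential $\exp(\tfrac R\ell x)$ built into the contraction $D_{\ell/R}$ into the oscillatory phase $e^{iR\langle s^{-1}\cdot x,e_1\rangle}$ of the Euclidean representation; everything else is bookkeeping and standard compactness, and this is also where the explicit structure of the standard representation, rather than a soft argument, is needed.
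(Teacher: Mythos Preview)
The paper does not actually prove this theorem: it is stated as a quotation from Dooley--Rice \cite[Theorem~1 and Corollary~1]{Dooley}, with no proof given in the present paper. So there is no in-paper argument to compare your proposal against.

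That said, your sketch is a faithful reconstruction of the original Dooley--Rice argument and is correct in outline. The reduction via the left regular action to the product $\psi(\exp(-\tfrac{R}{\ell}x)s)^{\ell}\,\tilde f(k^{-1}\exp(-\tfrac{R}{\ell}x)s)$ is exactly how the proof begins; the $\tilde f$ factor is handled by uniform continuity; and the key step --- a first-order expansion of $\psi$ along $\mathfrak p$ showing the linear term is purely imaginary, so that the $\ell$-th powers stay bounded and the $(1+z/\ell)^{\ell}\to e^{z}$ mechanism applies --- is precisely the content of \cite[Lemmas~1--4]{Dooley}. Your closed formula $\psi(h)=\langle h^{-1}e_1,\,e_1+ie_2\rangle$ (up to the sign/conjugate convention, which you rightly flag as needing to be matched to Section~\ref{rep SO(n+1)}) is the standard way to make this explicit. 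One cosmetic point: the statement's ``$\psi^\ell=\psi\circ\cdots\circ\psi$'' is a typo in the paper for the pointwise product, and you correctly interpret it as such.
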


\begin{corollary}\cite[Corollary 2]{Dooley}
The increase union $\bigcup_{\ell={1}}^\infty \left( {\mathcal{H}_{\sigma,\ell}}_{|_{\SO(n)}} \right)$ is dense in $\mathcal{H}_{\sigma,R}$ with respect to the $L^{2}(\SO(n))$-norm.
\end{corollary}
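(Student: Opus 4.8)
**Proof plan for Corollary 2 (density of $\bigcup_{\ell=1}^\infty \mathcal{H}_{\sigma,\ell}\big|_{\SO(n)}$ in $\mathcal{H}_{\sigma,R}$).**

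The plan is to deduce the density statement from the convergence in Theorem 1 (equations \eqref{Theorem 1 Dooley} and \eqref{Corollary 1 Dooley}) by a standard cyclicity-plus-approximation argument. First I would record that, by the preceding Lemma, the restricted spaces $\mathcal{H}_{\sigma,\ell}\big|_{\SO(n)}$ form an increasing chain of subspaces of $\mathcal{H}_{\sigma,R}$, so their union is itself a subspace; call it $\mathcal{D}$. Let $\overline{\mathcal{D}}$ be its $L^2(\SO(n))$-closure. Since $\omega_{\sigma,R}$ is irreducible, to prove $\overline{\mathcal{D}}=\mathcal{H}_{\sigma,R}$ it suffices to show that $\overline{\mathcal{D}}$ is a nonzero closed $\omega_{\sigma,R}$-invariant subspace: irreducibility then forces $\overline{\mathcal{D}}=\mathcal{H}_{\sigma,R}$. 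Nonzero-ness is immediate (take any $\ell_0$ with $\ell_0\geq\sigma_1$, so $\mathcal{H}_{\sigma,\ell_0}\neq 0$, and its restriction is nonzero because a holomorphic section that vanishes on $\SO(n)$ vanishes identically by the $\T^{m-1}$-covariance and the Cauchy–Riemann type conditions of Remark \ref{remark holomorphic sections}).

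The heart of the argument is invariance. Fix $\tilde f\in\mathcal{H}_{\sigma,\ell_0}$ and $(k,x)\in\M(n)$. By \eqref{Corollary 1 Dooley},
\begin{equation*}
\omega_{\sigma,R}(k,x)\big(\tilde f\big|_{\SO(n)}\big)=\lim_{\ell\to\infty}\rho_{\sigma,\ell_0+\ell}\big(D_{\ell/R}(k,x)\big)\big(\psi^\ell\tilde f\big)
\end{equation*}
in $L^2(\SO(n))$. Now the key observation is that, for each fixed $\ell$, the vector $\rho_{\sigma,\ell_0+\ell}\big(D_{\ell/R}(k,x)\big)\big(\psi^\ell\tilde f\big)$ lies in $\mathcal{H}_{\sigma,\ell_0+\ell}$ (it is the image of an element of that representation space under a group element $D_{\ell/R}(k,x)\in\SO(n+1)$, and $\mathcal{H}_{\sigma,\ell_0+\ell}$ is $\rho_{\sigma,\ell_0+\ell}$-stable), hence its restriction to $\SO(n)$ lies in $\mathcal{D}$. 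Therefore $\omega_{\sigma,R}(k,x)\big(\tilde f\big|_{\SO(n)}\big)$ is an $L^2(\SO(n))$-limit of elements of $\mathcal{D}$, i.e. it lies in $\overline{\mathcal{D}}$. Since vectors of the form $\tilde f\big|_{\SO(n)}$ with $\tilde f\in\mathcal{H}_{\sigma,\ell_0}$, ranging over all $\ell_0\geq\sigma_1$, span a dense subset of $\mathcal{D}$ and hence of $\overline{\mathcal{D}}$, and $\omega_{\sigma,R}(k,x)$ is bounded, it follows that $\omega_{\sigma,R}(k,x)\overline{\mathcal{D}}\subseteq\overline{\mathcal{D}}$ for every $(k,x)\in\M(n)$. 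Thus $\overline{\mathcal{D}}$ is a nonzero closed invariant subspace, and irreducibility finishes the proof.

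The main obstacle, and the step that deserves the most care, is justifying that $\overline{\mathcal{D}}$ is invariant under $\omega_{\sigma,R}(k,x)$ for \emph{all} $(k,x)$ rather than just those appearing as limits: this is exactly why one must first pass to the closure and use boundedness of the operators $\omega_{\sigma,R}(k,x)$ together with the fact that $\mathcal{D}$ is spanned by the restrictions $\tilde f\big|_{\SO(n)}$ — one cannot argue directly with $\mathcal{D}$ itself since it need not be closed. A secondary point to check carefully is that $\mathcal{H}_{\sigma,\ell_0+\ell}$ is genuinely $\rho_{\sigma,\ell_0+\ell}$-invariant as a concrete function space (this is built into the Borel–Weil–Bott realization via the left regular action described in Remark \ref{remark holomorphic sections}, since left translation preserves both the $\T$-covariance condition $(i)$ and the holomorphy condition $(ii)$). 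Once these are in place, no further computation is needed. \QED
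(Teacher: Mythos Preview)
The paper does not supply its own proof of this statement: it is quoted verbatim as \cite[Corollary 2]{Dooley} and used as a black box in the proof of Lemma~\ref{lemma 4}. So there is no in-paper argument to compare against.

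Your argument is correct and is precisely the natural way to deduce the density from the preceding theorem, which is presumably why Dooley and Rice label it a corollary. The scheme ``$\overline{\mathcal{D}}$ is nonzero, closed, and $\omega_{\sigma,R}$-invariant, hence equals $\mathcal{H}_{\sigma,R}$ by irreducibility'' is exactly right, and the key step---that each approximant $\rho_{\sigma,\ell_0+\ell}(D_{\ell/R}(k,x))(\psi^\ell\tilde f)$ stays in $\mathcal{H}_{\sigma,\ell_0+\ell}$, so its restriction lies in $\mathcal{D}$---is the point of the whole construction. Two minor remarks: (i) your justification of nonzero-ness is a bit elaborate; it suffices to note that in the Borel--Weil model a highest weight vector can be normalized to take the value $1$ at the identity $e\in K$, so its restriction to $K$ is visibly nonzero; (ii) in the last paragraph you say the restrictions $\tilde f|_{\SO(n)}$ ``span a dense subset of $\mathcal{D}$''---in fact they \emph{are} $\mathcal{D}$ by definition, so the passage from $\omega_{\sigma,R}(k,x)\mathcal{D}\subset\overline{\mathcal{D}}$ to $\omega_{\sigma,R}(k,x)\overline{\mathcal{D}}\subset\overline{\mathcal{D}}$ is immediate from unitarity.
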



\section{The approximation theorem}
 The aim of this section is to prove that the spherical functions of type $\tau$ corresponding to the strong Gelfand pair $(\SO(n)\ltimes\R^n, \SO(n))$ can be obtained as an appropriate limit of spherical functions of type $\tau$ associated to the strong Gelfand pair $(\SO(n+1), \SO(n))$.
\\ \\
Let $(\tau,V_\tau)$ be an arbitrary irreducible unitary representation of ${K}$. We take $(\omega_{\sigma,R},\mathcal{H}_{\sigma,R})\in \widehat{\M(n)}$ such that $\tau\subset \omega_{\sigma,R}$ as $K$-module, that is, $\tau$ appears in the decomposition of $\omega_{\sigma,R}$ into irreducible representations as $K$-module. According to Section \ref{rep M(n)}
$$\omega_{\sigma,R}=\Ind_{\SO(n-1)\ltimes\R^n}^{\SO(n)\ltimes \R^n }(\sigma\otimes\chi_R).$$ 
From Frobenius reciprocity,
the representation $\tau\subset\omega_{\sigma,R}$ as $\SO(n)$-module if and only if $\sigma\subset\tau$ as $\SO(n-1)$-module. Moreover, 
\begin{equation}\label{frobenius}
m(\tau,\omega_{\sigma,R} )=m(\sigma, \tau).    
\end{equation}
We denote by $(\tau_1,...,\tau_{m})$ the partition associated to $\tau$ if $n=2m$ and  $(\tau_1,...,\tau_{m-1})$ if $n=2m-1$.
\begin{remark}
Let $(\sigma_1,...,\sigma_{m-1})$ be the partition corresponding to the representation $\sigma\in\widehat{\SO(n-1)}$ and assume $\tau\subset\omega_{\sigma,R}$.  From \eqref{frobenius} and  the branching formulas given in Section \ref{rep SO(n+1)} we have the following: 
\begin{itemize}
\item[$(i)$] If $n=2m$, from (\ref{decomp so restr caso par}) we have that
\begin{equation}\label{sigma en tau 1}
\tau_1\geq\sigma_1\geq\tau_2\geq\sigma_2\geq \ ... \ \geq \tau_{m-1}\geq \sigma_{m-1}\geq |\tau_m|.
\end{equation}
\item[$(ii)$] If $n=2m-1$, from (\ref{decomp so restr caso imp}) 
\begin{equation}\label{sigma en tau 2}
\tau_1\geq\sigma_1\geq\tau_2\geq\sigma_2\geq \ ... \ \geq \sigma_{m-2}\geq\tau_{m-1}\geq |\sigma_{m-1}|.
\end{equation}
\end{itemize}
\end{remark}

\noindent Let $\mathcal{H}_{\sigma,R}(\tau)$ be the $\tau$-isotypic component of $\mathcal{H}_{\sigma,R}$.
We fix $\Phi_{\omega_{\sigma,R}}^{\tau, \M(n)}$ the spherical function of type $\tau$ of $(\M(n),K)$ associated to the representation $\omega_{\sigma,R}$ of $\M(n)$ (see Definition \ref{def spherical function of type tau}).  
\\ \\
We will consider a family of irreducible unitary representations of $\SO(n+1)$ that is a contracting sequence associated to $\omega_{\sigma,R}$. 
Let $\chi_\sigma$ denote the character associated to $\sigma$.
The special orthogonal group $\SO(n-1)$  can be embedded in $\SO(n+1)$ by starting with a $2\times 2$ identity block in the top left hand corner. Let $\T^m$ be the maximal torus of $\SO(n+1)$ and $\T^{m-1}$ be the maximal torus of $\SO(n-1)$ that are of the form given in Section \ref{rep SO(n+1)}. That is, if $n+1$ is even  
\begin{small}
\begin{equation*}
\T^m \supset \T^{m-1}=\left\lbrace \begin{pmatrix}  
  1& 0   \\
  0 & 1      \\
     &        & \cos(\theta_2)& \sin(\theta_2)  \\
     &        &  -\sin(\theta_2) & \cos(\theta_2) \\
 & &     &      & \ddots &\\
    & &   &      &        &  \cos(\theta_m)& \sin(\theta_m)  \\
    &  & &   &        & -\sin(\theta_m) & \cos(\theta_m) 
\end{pmatrix} | \ \theta_2, ..., \theta_m \in \R \right\rbrace .
\end{equation*}
\end{small}

\noindent When $n+1$ is odd they are the same but   with a one in the right bottom corner. We consider a Cartan subalgebra of $\mathfrak{so}(n+1,\C)$ generated by $\{H_1,H_2,...,H_m\}$ as in Section  \ref{rep SO(n+1)}. By the relations of orthogonality with respect to the Killing form, we can consider that $\{H_2,...,H_m\}$ is a basis of a Cartan subalgebra of $\mathfrak{so}(n-1,\C)$ embedded on $\mathfrak{so}(n+1,\C)$.
\\ \\
For each non-negative integer $\ell$, let $(\rho_{\sigma,\ell},\mathcal{H}_{\sigma,\ell})$ be the representation  of $\SO(n+1)$ constructed as in Section \ref{rep SO(n+1)} from the character $\gamma^\ell\chi_\sigma$ of $\T^m$. It is easy to see that the corresponding partition is $(\ell,\sigma_1,...,\sigma_{m-1})$.  If $\ell\in\N$ is such that $\ell<\sigma_1$, the representation $\rho_{\sigma,\ell}$ is trivial and if $\ell\geq\sigma_1$,  the representation $\rho_{\sigma,\ell}$ is irreducible.

\begin{lemma}\label{lemma 2}
If $\tau$ appears in the decomposition into irreducible representations of $\omega_{\sigma,R}$ as $K$-module, then $\tau$ appears in the decomposition of $\rho_{\sigma,\ell}$ as $K$-module for all $\ell\geq \tau_1$.
\end{lemma}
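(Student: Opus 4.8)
The plan is to reduce the statement to a purely combinatorial comparison of branching conditions, using the explicit description of the relevant partitions that has already been assembled. Recall that $\rho_{\sigma,\ell}$ corresponds to the partition $(\ell,\sigma_1,\dots,\sigma_{m-1})$ of $\mathfrak{so}(n+1,\C)$, and that $K\simeq\SO(n)$ sits inside $\SO(n+1)$ in the standard way, so that deciding whether $\tau$ (with partition $(\tau_1,\dots,\tau_{m})$ or $(\tau_1,\dots,\tau_{m-1})$ according to the parity of $n$) occurs in $(\rho_{\sigma,\ell})_{|_{K}}$ is exactly an instance of the branching rules \eqref{decomp so restr caso imp} or \eqref{decomp so restr caso par}. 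So first I would split into the two parity cases $n=2m$ and $n=2m-1$ and write down, in each case, the interlacing inequalities that characterize $\tau\subset(\rho_{\sigma,\ell})_{|_K}$.

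Next, I would invoke the hypothesis. By Frobenius reciprocity \eqref{frobenius}, $\tau\subset\omega_{\sigma,R}$ as a $K$-module is equivalent to $\sigma\subset\tau$ as an $\SO(n-1)$-module, which is precisely the chain of inequalities \eqref{sigma en tau 1} (if $n=2m$) or \eqref{sigma en tau 2} (if $n=2m-1$). The point is then simply that these inequalities, \emph{together with} the extra freedom $\ell\geq\tau_1$, imply the branching inequalities for $\tau\subset(\rho_{\sigma,\ell})_{|_K}$. Concretely, in the case $n=2m-1$ (odd, so $n+1=2m$ is even), the required condition from \eqref{decomp so restr caso par} applied to the partition $(\ell,\sigma_1,\dots,\sigma_{m-1})$ of $\mathfrak{so}(2m,\C)$ restricting to $\mathfrak{so}(2m-1,\C)$ reads
\begin{equation*}
\ell\geq\tau_1\geq\sigma_1\geq\tau_2\geq\sigma_2\geq\ \dots\ \geq\sigma_{m-2}\geq\tau_{m-1}\geq|\sigma_{m-1}|,
\end{equation*}
and every inequality here except the first is exactly one of those guaranteed by \eqref{sigma en tau 2}, while the first is the assumption $\ell\geq\tau_1$. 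The case $n=2m$ is analogous: one applies \eqref{decomp so restr caso imp} to $(\ell,\sigma_1,\dots,\sigma_{m-1})$ of $\mathfrak{so}(2m+1,\C)$ restricting to $\mathfrak{so}(2m,\C)$, obtaining the chain $\ell\geq\tau_1\geq\sigma_1\geq\tau_2\geq\dots\geq\tau_{m-1}\geq\sigma_{m-1}\geq|\tau_m|$, whose entries past the first are supplied by \eqref{sigma en tau 1}. One should also note the parity/integrality bookkeeping: all the $\tau_i$ and $\sigma_i$ are integers (genuine group representations of $\SO(N)$), so the "all integers or all half-integers" side condition in the branching rules is automatic.

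I do not expect a serious obstacle here; the content is that the interlacing condition for $\sigma\subset\tau$ plus a large enough leading coordinate $\ell$ assembles into the interlacing condition for $\tau\subset\rho_{\sigma,\ell}$. The one place to be careful is the translation between branching for the \emph{Lie algebras} $\mathfrak{so}(N,\C)$ (which is how the rules are stated in the excerpt) and branching for the compact \emph{groups} $\SO(N)$ and the subgroup $K$; since $K$ is precisely the stabilizer of $e_1$, i.e.\ the $\SO(n)$ block, and the Borel--Weil--Bott model identifies group representations with algebra representations via highest weights, this is routine but worth a sentence. A second small point is that $\rho_{\sigma,\ell}$ is only irreducible for $\ell\geq\sigma_1$, which is implied by $\ell\geq\tau_1\geq\sigma_1$, so the statement is non-vacuous for all $\ell\geq\tau_1$. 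I would finish by remarking that, since $m(\sigma,\tau)\le 1$ by the strong Gelfand pair hypothesis, in fact $\tau$ occurs with multiplicity exactly one in $(\rho_{\sigma,\ell})_{|_K}$, which will be needed for the spherical functions of type $\tau$ to be well defined in the sequel.
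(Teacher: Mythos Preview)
Your proposal is correct and follows essentially the same approach as the paper: split into the two parity cases, invoke Frobenius reciprocity \eqref{frobenius} to translate the hypothesis into the interlacing inequalities \eqref{sigma en tau 1} or \eqref{sigma en tau 2}, and then observe that these, together with $\ell\geq\tau_1$, give precisely the branching condition \eqref{decomp so restr caso imp} or \eqref{decomp so restr caso par} for $\tau\subset(\rho_{\sigma,\ell})_{|_K}$. Your version is simply more explicit---writing out the full interlacing chains and adding the integrality, irreducibility ($\ell\geq\tau_1\geq\sigma_1$), and multiplicity-one remarks---whereas the paper's proof just cites the relevant equation numbers in each case.
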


\begin{proof}
We will apply the results given in Section \ref{rep SO(n+1)} to our case recalling \eqref{frobenius}.
\begin{itemize}
\item[$(i)$] Let $n+1=2m+1$.
Since $\rho_{\sigma,\ell}$ is in correspondence with the partition $(\ell,\sigma_1,...,\sigma_{m-1})$, it follows from (\ref{sigma en tau 1}) and (\ref{decomp so restr caso imp}) that $\tau$ appears in the decomposition of $\rho_{\sigma,\ell}$ as $K$-module if $\ell\geq\tau_1$,.
\item[$(ii)$] 
Let $n+1=2m$. 
If $\ell\geq\tau_1$, it follows from (\ref{sigma en tau 2}) and (\ref{decomp so restr caso par}) that $\tau$ appears in the decomposition of $\rho_{\sigma,\ell}$ as $K$-module.
\end{itemize}
\end{proof}
\begin{remark}\label{remark rep M(n) para el aprox teo}
Note that, from (\ref{principal serie}) the representation $\omega_{\sigma,R}$ restricted to $K$ acts on $\mathcal{H}_{\sigma,R}$ as the left regular action, i.e., for each $k\in K$, 
$$(\omega_{\sigma,R}(k,0)f)(k_0)=(L_k(f))(k_0)=f(k^{-1}k_0) \quad \forall k_0\in K \quad \text{ and } \quad \forall f\in \mathcal{H}_{\sigma,R}.$$
Apart from that, for each $\ell\in\Z_{\geq 0}$, ${\mathcal{H}_{\sigma,\ell}}_{|_{K}}$ is a $K$-submodule of ${\mathcal{H}_{\sigma,R}}$.
Thus, 
 the restriction operator given by
 \begin{equation*}
     Res_\ell(\tilde{f}):=\tilde{f}_{|_{K}} \qquad \forall \tilde{f}\in \mathcal{H}_{\sigma,\ell}
 \end{equation*} 
 intertwines  $\mathcal{H}_{\sigma,\ell}$ and $\mathcal{H}_{\sigma,R}$ as $K$-modules.
\end{remark}

\begin{lemma} \label{lemma 4}
If $f\in\mathcal{H}_{\sigma,R}(\tau)$, then there exists $\ell'\in\N$ such that $f\in{\mathcal{H}_{\sigma,\ell}}_{|_{K}}$ for all $\ell\geq \ell'$. Moreover, let $\ell_0:=\max\{\tau_1,\ell'\}$, then there exists  a unique $\tilde{f}\in\mathcal{H}_{\sigma,\ell_0}(\tau)$ such that $f=\tilde{f}_{|_{K}}$.
\end{lemma}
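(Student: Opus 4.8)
The plan is to combine the three facts about the contracting sequence recalled in Section~\ref{sec rdos de Dooley} --- that the spaces $\mathcal{H}_{\sigma,\ell}|_{K}$ form an increasing chain of $K$-submodules of $\mathcal{H}_{\sigma,R}$, that their union is dense in the $L^{2}(\SO(n))$-norm, and that $\tau\subset\rho_{\sigma,\ell}$ as $K$-module once $\ell\geq\tau_1$ (Lemma~\ref{lemma 2}) --- with the fact that the isotypic component $\mathcal{H}_{\sigma,R}(\tau)$ is \emph{finite-dimensional}: indeed $\dim\mathcal{H}_{\sigma,R}(\tau)=d_\tau\,m(\tau,\omega_{\sigma,R})=d_\tau$ by \eqref{frobenius} and because $(\M(n),\SO(n))$ is a strong Gelfand pair.

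For the first assertion I would push the density statement down to the $\tau$-isotypic level. Let $P^\tau:=P^\tau_{\omega_{\sigma,R}}$ be the orthogonal projection of $\mathcal{H}_{\sigma,R}$ onto $\mathcal{H}_{\sigma,R}(\tau)$; it is bounded, hence continuous, and since each $\mathcal{H}_{\sigma,\ell}|_{K}$ is a finite-dimensional (hence closed) $K$-submodule, $P^\tau$ carries it onto $V_\ell:=\bigl(\mathcal{H}_{\sigma,\ell}|_{K}\bigr)(\tau)=\mathcal{H}_{\sigma,\ell}|_{K}\cap\mathcal{H}_{\sigma,R}(\tau)$. The $V_\ell$ form an increasing chain of subspaces of the finite-dimensional space $\mathcal{H}_{\sigma,R}(\tau)$, so the chain stabilizes at some $\ell'$ and $\bigcup_\ell V_\ell=V_{\ell'}$. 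On the other hand $\bigcup_\ell V_\ell=P^\tau\bigl(\bigcup_\ell \mathcal{H}_{\sigma,\ell}|_{K}\bigr)$, which by continuity of $P^\tau$ and the density statement is dense in $P^\tau(\mathcal{H}_{\sigma,R})=\mathcal{H}_{\sigma,R}(\tau)$; being finite-dimensional, it is closed, so it equals $\mathcal{H}_{\sigma,R}(\tau)$. Hence $\mathcal{H}_{\sigma,R}(\tau)=V_\ell\subset\mathcal{H}_{\sigma,\ell}|_{K}$ for every $\ell\geq\ell'$; in particular $f\in\mathcal{H}_{\sigma,\ell}|_{K}$ for all $\ell\geq\ell'$, which is the first assertion.

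For the ``moreover'' part, put $\ell_0:=\max\{\tau_1,\ell'\}$. Since $\ell_0\geq\tau_1$, Lemma~\ref{lemma 2} gives $\tau\subset\rho_{\sigma,\ell_0}$, and as $(\SO(n+1),\SO(n))$ is a strong Gelfand pair, $m(\tau,\rho_{\sigma,\ell_0})=1$; hence $\mathcal{H}_{\sigma,\ell_0}(\tau)$ and $\mathcal{H}_{\sigma,R}(\tau)$ are both irreducible $K$-modules isomorphic to $V_\tau$. The restriction operator $Res_{\ell_0}\colon\mathcal{H}_{\sigma,\ell_0}\to\mathcal{H}_{\sigma,R}$ is a $K$-intertwiner (Remark~\ref{remark rep M(n) para el aprox teo}), so it commutes with the isotypic projections and therefore maps $\mathcal{H}_{\sigma,\ell_0}(\tau)$ onto $\bigl(\mathcal{H}_{\sigma,\ell_0}|_{K}\bigr)(\tau)=V_{\ell_0}=\mathcal{H}_{\sigma,R}(\tau)$, the last equality because $\ell_0\geq\ell'$. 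A surjective (equivalently, by Schur's lemma, nonzero) $K$-morphism between irreducible $K$-modules of the same dimension is an isomorphism, so $Res_{\ell_0}$ restricts to a linear isomorphism of $\mathcal{H}_{\sigma,\ell_0}(\tau)$ onto $\mathcal{H}_{\sigma,R}(\tau)$; taking $\tilde f$ to be the preimage of $f$ yields the unique element of $\mathcal{H}_{\sigma,\ell_0}(\tau)$ with $\tilde f|_{K}=f$.

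The step needing the most care, and the only place where anything beyond bookkeeping happens, is the passage from ``$\bigcup_\ell\mathcal{H}_{\sigma,\ell}|_{K}$ is dense'' to ``$f$ actually lies in some $\mathcal{H}_{\sigma,\ell}|_{K}$'': this relies crucially on $\mathcal{H}_{\sigma,R}(\tau)$ being finite-dimensional (so that the increasing chain $V_\ell$ stabilizes and its union is closed) and on $P^\tau$ preserving the subspaces $\mathcal{H}_{\sigma,\ell}|_{K}$. Everything afterwards --- the commutation of $Res_{\ell_0}$ with isotypic projections, Schur's lemma, and the uniqueness --- is routine.
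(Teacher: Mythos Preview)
Your proof is correct and follows essentially the same approach as the paper's: both arguments use the finite-dimensionality of $\mathcal{H}_{\sigma,R}(\tau)$ together with the density of $\bigcup_\ell\mathcal{H}_{\sigma,\ell}|_{K}$ to obtain the first assertion, and then the multiplicity-freeness of $\rho_{\sigma,\ell}$ as a $K$-module together with the $K$-intertwining property of $Res_{\ell}$ for the second. Your version is somewhat more explicit than the paper's---you spell out the role of the projection $P^\tau$ in passing from density to actual containment, and you invoke Schur's lemma by name where the paper simply asserts that $Res_\ell$ is an isomorphism between isotypic components---but the underlying logic is identical.
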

\begin{proof}
The space $\mathcal{H}_{\sigma,R}(\tau)\simeq V_\tau$ is an invariant factor in the decomposition of $\mathcal{H}_{\sigma,R}$ as $K$-module.
From Section \ref{sec rdos de Dooley}, each ${\mathcal{H}_{\sigma,\ell}}_{|_{K}}$ is a subspace of $\mathcal{H}_{\sigma,R}$, moreover, $\bigcup_{\ell={1}}^\infty \left( {\mathcal{H}_{\sigma,\ell}}_{|_{\SO(n)}} \right)$ is dense in $\mathcal{H}_{\sigma,R}$. 
Since the dimension of $V_\tau$ is finite,  there exists $\ell'\in \N$ such that $\mathcal{H}_{\sigma,R}(\tau)$ is contained in ${\mathcal{H}_{\sigma,\ell'}}_{|_{K}}$. Furthermore, as ${\mathcal{H}_{\sigma,\ell}}_{|_{K}}\subset {\mathcal{H}_{\sigma,\ell+1}}_{|_{K}}$ for all $\ell\in \N$, it follows that $\mathcal{H}_{\sigma,R}(\tau)\subset {\mathcal{H}_{\sigma,\ell}}_{|_{K}}$ for all $\ell\geq \ell'$.   
\\ \\
Apart from that, since the decomposition of $\rho_{\sigma,\ell}$ as $K$-module is multiplicity free (for all $\ell\in\N$),
then the operator $Res_\ell$ is a linear isomorphism that maps the irreducible component $\mathcal{H}_{\sigma,\ell}(\tau)$ into the irreducible component ${\mathcal{H}_{\sigma,\ell}}_{|_{K}}(\tau)$, for all $\ell \geq \tau_1$.
Finally, if $f$ is an arbitrary function in $\mathcal{H}_{\sigma,R}(\tau)$, there is a unique $\tilde{f}\in \mathcal{H}_{\sigma,\ell_0}(\tau)$ such that $\tilde{f}(k)=f(k)$ for all $k\in K$.  
\end{proof}

\begin{lemma}\label{lemma 5}
Let $\ell_0\in\N$ as in Lemma \ref{lemma 4} and let $\tilde{f}\in\mathcal{H}_{\sigma,\ell_0}(\tau)$. It follows that $\psi^{\ell}\tilde{f}\in \mathcal{H}_{\sigma,\ell_0+\ell}(\tau)$ for all $\ell\in \N$.
\end{lemma}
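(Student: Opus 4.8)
The plan is to observe that multiplication by $\psi$ is a morphism of $K$-modules and then to iterate. By \cite[Lemma 5]{Dooley} (recalled in Section \ref{sec rdos de Dooley}), multiplication by $\psi$ already carries $\mathcal{H}_{\sigma,\ell}$ into $\mathcal{H}_{\sigma,\ell+1}$, so applying it $\ell$ times gives $\psi^\ell\tilde f\in\mathcal{H}_{\sigma,\ell_0+\ell}$ for every $\ell\in\N$. Thus the only thing left to check is that this operator preserves the $\tau$-isotypic component.

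For this I would use that $\psi$ was chosen (Section \ref{A special character and a special function}) to be left $K$-invariant, i.e. $\psi(k^{-1}g)=\psi(g)$ for all $k\in K$ and $g\in\SO(n+1)$, together with the fact that $\SO(n+1)$ acts on each $\mathcal{H}_{\sigma,\ell}$ by the left regular representation $L_g h(x)=h(g^{-1}x)$ (Remark \ref{remark holomorphic sections}). For $k\in K$, $h\in\mathcal{H}_{\sigma,\ell}$ and $x\in\SO(n+1)$,
\begin{equation*}
\big(L_k(\psi h)\big)(x)=\psi(k^{-1}x)\,h(k^{-1}x)=\psi(x)\,h(k^{-1}x)=\psi(x)\,\big(L_k h\big)(x),
\end{equation*}
so $L_k(\psi h)=\psi\cdot(L_k h)$; that is, multiplication by $\psi$ commutes with the action of $K$ and hence is a homomorphism of $K$-modules from $\mathcal{H}_{\sigma,\ell}$ to $\mathcal{H}_{\sigma,\ell+1}$.

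Since a homomorphism of $K$-modules maps the $\tau$-isotypic component into the $\tau$-isotypic component, multiplication by $\psi$ sends $\mathcal{H}_{\sigma,\ell_0}(\tau)$ into $\mathcal{H}_{\sigma,\ell_0+1}(\tau)$; composing $\ell$ times yields $\psi^\ell\tilde f\in\mathcal{H}_{\sigma,\ell_0+\ell}(\tau)$ for all $\ell\in\N$ (note that $\ell_0+\ell\geq\tau_1$, so this is indeed the isotypic component considered in Lemma \ref{lemma 2}). There is no real obstacle here: the only point needing attention is the left/right bookkeeping in the displayed line — it is precisely the \emph{left} $K$-invariance of $\psi$ that lets one pull $\psi$ through $L_k$ — after which the statement follows from the standard behaviour of $K$-maps on isotypic components combined with the already-cited level-raising property of $\psi$.
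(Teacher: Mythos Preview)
Your proof is correct and follows essentially the same approach as the paper's: both use the level-raising property of $\psi$ from \cite[Lemma~5]{Dooley} and then invoke the left $K$-invariance of $\psi$ to show that multiplication by $\psi$ (hence by $\psi^\ell$) intertwines the $K$-actions, so that the $\tau$-isotypic component is preserved. The only cosmetic difference is that the paper appeals to multiplicity-freeness of $(\rho_{\sigma,\ell})_{|_K}$ to say ``irreducible component goes to irreducible component'', whereas you use the more general fact that any $K$-module map preserves isotypic components; your formulation is actually the cleaner one, since multiplicity-freeness is not needed for this step.
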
 
\begin{proof}
From Section \ref{sec rdos de Dooley} if $\tilde{f}\in\mathcal{H}_{\sigma,\ell_0}$, then $\psi^{\ell}\tilde{f}\in \mathcal{H}_{\sigma,\ell_0+\ell}$ for all $\ell\in\N$. Also, since $\psi$ is a $K$-invariant function (i.e, $\psi(k^{-1}g)=\psi(g)$ for all $ k\in K$ and $g\in SO(n+1)$), 
then the multiplication by $\psi^\ell$ is an intertwining operator between $(\rho_{\sigma,\ell_0},\mathcal{H}_{\sigma,\ell_0})$ and $(\rho_{\sigma,\ell_0+\ell},\mathcal{H}_{\sigma,\ell_0+\ell})$ as $K$-modules. 
Since the decomposition of $\rho_{\sigma,\ell}$ as $K$-module is multiplicity free (for all $\ell\in\N$), the multiplication by $\psi^\ell$ maps irreducible component to irreducible component, that is, maps  $\tilde{f}\in\mathcal{H}_{\sigma,\ell_0}(\tau)$ into $\psi^\ell\tilde{f}\in\mathcal{H}_{\sigma,\ell_0+\ell}(\tau)$.
\end{proof}


\noindent Let $\ell_0$ as in Lemma  \ref{lemma 4}. We consider the family 
\begin{equation}\label{sq sph func de so}
\{\Phi_{\rho_{\sigma,\ell_0+\ell}}^{\tau, \SO(n+1)}\}_{\ell\in\Z_{\geq 0}}
\end{equation}
of spherical functions of type $\tau$ of the strong Gelfand pair $(\SO(n+1), K)$ associated with the representations $\rho_{\sigma,\ell_0+\ell}$.
\\ \\
With all the previous notation we enunciate the following result.
\begin{theorem} Let $\tau\in\widehat{\SO(n)}$ and $(\omega_{\sigma,R},\mathcal{H}_{\sigma,R})\in \widehat{\M(n)}$ such that $\sigma\subset\tau$ as $\SO(n-1)$-module. Let $\Phi_{\omega_{\sigma,R}}^{\tau, \M(n)}$ be the spherical function of type $\tau$ of $(\M(n),\SO(n))$ corresponding to $\omega_{\sigma,R}$. Then, the family $\{\Phi_{\rho_{\sigma,\ell}}^{\tau,\SO(n+1)}\}_{\ell\in \Z_{\geq 0}}$ of spherical functions of type $\tau$ of $(\SO(n+1),\SO(n))$ satisfying: 

\noindent For each $f\in \mathcal{H}_{\sigma,R}(\tau)$  there exists a unique $\tilde{f}\in \mathcal{H}_{\sigma,\ell_0}(\tau)$ such that for every compact subset $B$  of $\R^n$  it holds that 
\begin{itemize}
    \item[$(i)$] 
\begin{equation*}
\lim\limits_{\ell \to \infty} \left( \Phi_{\rho_{\sigma,\ell_0+\ell}}^{\tau, \SO(n+1)} (D_{\ell/R}(k,x) ) ( \psi^\ell \tilde{f} ) \right) (s) = \left( \Phi_{\omega_{\sigma,R}}^{\tau, \M(n)}(k,x)(f) \right) (s) \quad \text{ for all } s\in \SO(n) \text{ and}
\end{equation*}
    \item[$(ii)$] 
\begin{equation*}
\lim\limits_{\ell \to \infty} \left\| \left( \Phi_{\rho_{\sigma,\ell_0+\ell}}^{\tau, \SO(n+1)} (D_{\ell/R}(k,x) ) ( \psi^\ell \tilde{f} ) \right) _{|_{\SO(n)}} - \Phi_{\omega_{\sigma,R}}^{\tau, \M(n)}(k,x)(f) \right\|_{L^2(\SO(n))}=0
\end{equation*}
\end{itemize}
where the convergences are uniformly for $(k,x)\in \SO(n)\times B$.
\end{theorem}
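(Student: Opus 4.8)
The strategy is to reduce the statement to the Dooley--Rice results already quoted in Section~\ref{sec rdos de Dooley}, by recognizing the spherical function of type $\tau$ as the representation operator ``sandwiched'' between isotypic projections, and then commuting the projection with the limit. First I would fix $f\in\mathcal{H}_{\sigma,R}(\tau)$ and, invoking Lemma~\ref{lemma 4}, obtain the unique $\tilde f\in\mathcal{H}_{\sigma,\ell_0}(\tau)$ with $\tilde f_{|_{K}}=f$; by Lemma~\ref{lemma 5}, $\psi^\ell\tilde f\in\mathcal{H}_{\sigma,\ell_0+\ell}(\tau)$ for every $\ell$, so each $\psi^\ell\tilde f$ already lies in the $\tau$-isotypic subspace on which $\Phi^{\tau,\SO(n+1)}_{\rho_{\sigma,\ell_0+\ell}}$ acts nontrivially. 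Since $P^\tau_{\rho_{\sigma,\ell_0+\ell}}$ is the identity on $\mathcal{H}_{\sigma,\ell_0+\ell}(\tau)$, the outer projection in $\Phi^\tau_\pi=P^\tau_\pi\circ\pi(g)\circ P^\tau_\pi$ is harmless on these vectors, and the only real content is that the inner projection can be pushed through the limit.

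\textbf{Key steps.} The plan proceeds as follows. (1) Write, for $g=D_{\ell/R}(k,x)$,
\begin{equation*}
\Phi^{\tau,\SO(n+1)}_{\rho_{\sigma,\ell_0+\ell}}(g)(\psi^\ell\tilde f)
= P^\tau_{\rho_{\sigma,\ell_0+\ell}}\Bigl(\rho_{\sigma,\ell_0+\ell}(g)(\psi^\ell\tilde f)\Bigr),
\end{equation*}
using that $\psi^\ell\tilde f$ is already in the $\tau$-component. (2) Apply the Dooley--Rice Theorem (equations \eqref{Theorem 1 Dooley} and \eqref{Corollary 1 Dooley}) to the vector $\tilde f\in\mathcal{H}_{\sigma,\ell_0}$: the inner vectors $\rho_{\sigma,\ell_0+\ell}(g)(\psi^\ell\tilde f)$ converge, pointwise on $\SO(n)$ and in $L^2(\SO(n))$ uniformly for $(k,x)\in\SO(n)\times B$, to $\omega_{\sigma,R}(k,x)(f)$. (3) Recall that the projection $P^\tau_{\omega_{\sigma,R}}$ onto $\mathcal{H}_{\sigma,R}(\tau)$ is given by \eqref{projection} as an integral $d_\tau\int_K\overline{\chi_\tau(k)}\,\omega_{\sigma,R}(k,0)\,dk$, i.e.\ as an average of the left-regular action of $K$ (Remark~\ref{remark rep M(n) para el aprox teo}); the same formula defines $P^\tau_{\rho_{\sigma,\ell_0+\ell}}$, and under the restriction-to-$\SO(n)$ identification $Res_\ell$ (which intertwines the $K$-actions) these projection operators are ``the same'' integral operator, independent of $\ell$ in an appropriate sense. (4) Because $\psi^\ell\tilde f$ lies in $\mathcal{H}_{\sigma,\ell_0+\ell}(\tau)$, apply $P^\tau_{\rho_{\sigma,\ell_0+\ell}}$ to the left-hand side; then use the $L^2$-convergence of step (2) together with the fact that $P^\tau$ is a bounded operator (norm $\le d_\tau$) that commutes with restriction to $K$ to conclude
\begin{equation*}
\lim_{\ell\to\infty}\Bigl\| P^\tau_{\rho_{\sigma,\ell_0+\ell}}\bigl(\rho_{\sigma,\ell_0+\ell}(g)(\psi^\ell\tilde f)\bigr)_{|_{\SO(n)}} - P^\tau_{\omega_{\sigma,R}}\bigl(\omega_{\sigma,R}(k,x)(f)\bigr) \Bigr\|_{L^2(\SO(n))}=0,
\end{equation*}
uniformly for $(k,x)\in\SO(n)\times B$, which is part $(ii)$ since $P^\tau_{\omega_{\sigma,R}}(\omega_{\sigma,R}(k,x)f)=\Phi^{\tau,\M(n)}_{\omega_{\sigma,R}}(k,x)(f)$ (the outer projection being again trivial on $f\in\mathcal{H}_{\sigma,R}(\tau)$). (5) For part $(i)$, run the same averaging argument but starting from the pointwise convergence \eqref{Theorem 1 Dooley}: write $P^\tau$ as the integral over the compact group $K$, bound the integrand uniformly (the family $\{\rho_{\sigma,\ell_0+\ell}(g)(\psi^\ell\tilde f)(s)\}$ is uniformly bounded in $s$ and $\ell$ on $\SO(n)\times B$, e.g.\ by Peter--Weyl estimates or directly from the explicit formulas), and pass the limit inside the integral by dominated convergence to get pointwise-in-$s$ convergence of the averaged functions.

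\textbf{Main obstacle.} The delicate point is step (4)--(5): making precise the claim that the isotypic projections $P^\tau_{\rho_{\sigma,\ell_0+\ell}}$, which a priori live on different Hilbert spaces $\mathcal{H}_{\sigma,\ell_0+\ell}$, all ``restrict to'' the single projection $P^\tau_{\omega_{\sigma,R}}$ on $\mathcal{H}_{\sigma,R}$, and that applying them commutes with the restriction map and with the limit. This rests on three facts, each of which needs to be checked carefully: that $Res_\ell$ intertwines the $K$-actions (Remark~\ref{remark rep M(n) para el aprox teo}), so $Res_\ell\circ P^\tau_{\rho_{\sigma,\ell_0+\ell}} = P^\tau_{\omega_{\sigma,R}}\circ Res_\ell$ on $\mathcal{H}_{\sigma,\ell_0+\ell}$; that the vectors $\rho_{\sigma,\ell_0+\ell}(g)(\psi^\ell\tilde f)$, although not a priori bi-equivariant, have restrictions to $\SO(n)$ that converge in $L^2(\SO(n))$ so that the bounded operator $P^\tau_{\omega_{\sigma,R}}$ may be applied to the limit; and the uniform (in $\ell$ and in $(k,x)\in\SO(n)\times B$) boundedness needed to justify dominated convergence in the pointwise statement. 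Once these are in place the theorem follows, with the choice of $\tilde f$ and of the sequence $\{\Phi^{\tau,\SO(n+1)}_{\rho_{\sigma,\ell_0+\ell}}\}$ exactly as constructed in Lemmas~\ref{lemma 4} and~\ref{lemma 5}.
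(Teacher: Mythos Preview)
Your proposal is correct and follows essentially the same route as the paper: invoke Lemmas~\ref{lemma 4} and~\ref{lemma 5} to place $\psi^\ell\tilde f$ in $\mathcal{H}_{\sigma,\ell_0+\ell}(\tau)$, apply the Dooley--Rice limits \eqref{Theorem 1 Dooley}--\eqref{Corollary 1 Dooley}, and then convolve with $d_\tau\overline{\chi_\tau}$ to recover the isotypic projection and hence the spherical function on both sides. One simplification for your ``main obstacle'' in part~$(i)$: dominated convergence and uniform bounds are unnecessary, since left convolution by $h\in K$ sends $\rho_{\sigma,\ell_0+\ell}(D_{\ell/R}(k,x))$ to $\rho_{\sigma,\ell_0+\ell}(D_{\ell/R}(hk,h\cdot x))$, and $(hk,h\cdot x)$ stays in the compact set $\SO(n)\times(K\cdot B)$ on which the Dooley--Rice convergence is already uniform --- so the integral over $h$ passes directly through the limit.
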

\begin{proof}
Let $\tilde{f}$ given by Lemma \ref{lemma 4}. First of all, note that $\Phi_{\rho_{\sigma,\ell_0+\ell}}^{\tau, \SO(n+1)} (g)\in \End(\mathcal{H}_{\sigma,\ell_0+\ell}(\tau))$ and from Lemma \ref{lemma 5},  $\psi^\ell \tilde{f}\in \mathcal{H}_{\sigma,\ell_0+\ell}(\tau)$. 
\\ \\
Since the convergence in (\ref{Theorem 1 Dooley}) and (\ref{Corollary 1 Dooley}) is uniform for $(k,x)\in \SO(n)\times B$, we are allowed to make a convolution (over $K$) with $d_\tau\overline{\chi_\tau}$ and we get that
 for all $s\in	K$, 
	\begin{equation*}
	\lim\limits_{\ell \to \infty} \left( d_\tau\overline{\chi_\tau} *  	\left(\rho_{\sigma,\ell_0+\ell} (D_{\ell/R}(k,x)) ( \psi^\ell \tilde{f} )_{|_{K}} \right)	\right) (s) = \left( d_\tau\overline{\chi_\tau} * \omega_{\sigma,R}(k,x)	(f) \right)(s)
	\end{equation*}
and also,
\begin{equation*}
	\lim\limits_{\ell \to \infty}   	||d_\tau\overline{\chi_\tau} *_K (\rho_{\sigma,\ell_0+\ell} (D_{\ell/R}(k,x)) ( \psi^\ell \tilde{f} )) 	- d_\tau\overline{\chi_\tau} * (\omega_{\sigma,R}(k,x)(f)) ||_{L^2(\SO(n))} = 0 .
	\end{equation*}	
Since it is obvious that 
$$d_\tau\overline{\chi_\tau} *  	\left(\rho_{\sigma,\ell_0+\ell} (g) ( \psi^\ell \tilde{f} )_{|_{K}}\right)= 
\left( d_\tau\overline{\chi_\tau} *_K  	\rho_{\sigma,\ell_0+\ell} (g) ( \psi^\ell \tilde{f} )\right)_{|_{K}} \quad \forall g\in \SO(n+1),$$	
then for each $g\in \SO(n+1)$ and for all $s\in K$,
\begin{align*}
\left( d_\tau \overline{\chi_\tau} *_K  	\rho_{\sigma,\ell_0+\ell} (g) ( \psi^\ell \tilde{f} )\right)(s)&=
d_\tau \int_{K} \overline{\chi_\tau}(k) \left(\rho_{\sigma,\ell_0+\ell} (g) ( \psi^\ell \tilde{f} )\right)(k^{-1}s)dk \\
&= d_\tau \int_{K} \overline{\chi_\tau}(k) L_k\left(\rho_{\sigma,\ell_0+l} (g) ( \psi^\ell \tilde{f} )\right)(s)dk\\
&= d_\tau \int_{K} \overline{\chi_\tau}(k) \rho_{\sigma,{\ell_0+\ell}}(k)\left(\rho_{\sigma,\ell_0+\ell} (g) ( \psi^\ell \tilde{f} )\right)(s)dk\\
&=P_{\rho_{\sigma,\ell_0+\ell}}^\tau \left(  \rho_{\sigma,\ell_0+\ell} (g) ( \psi^\ell \tilde{f} ) \right)(s)\\
&=\left( \Phi_{\rho_{\sigma,\ell_0+\ell}}^{\tau, \SO(n+1)} (g) ( \psi^\ell \tilde{f} ) \right) (s).
\end{align*}
Similarly, for each $(k,x)\in \M(n)$ and for all $s\in K$,
\begin{align*}
\left( d_\tau\overline{\chi_\tau} *  \omega_{\sigma,R}(k,x)	(f) \right)(s)&= P_{\omega_{\sigma,R}}^\tau \left(  \omega_{\sigma,R} (k,x) ( {f} ) \right)(s)\\
&=\left( \Phi_{\omega_{\sigma,R}}^{\tau, \M(n)} (k,x) ( {f} ) \right) (s).
\end{align*}
\end{proof}

\noindent We would like to end this paper, as in the last remark given by Dooley and Rice in \cite{Dooley}, saying that the harmonic analysis (scalar-valued and vector or matrix-valued) on $\M(n)$ can be obtained as a limit (in an appropriate sense) of the harmonic analysis on $\SO(n+1)$. Indeed,
consider for each $\ell\in\Z_{\geq 0}$ the map
\begin{gather*}
Res_{\ell_0+\ell}:\mathcal{H}_{\sigma,\ell_0+\ell}(\tau)\longrightarrow \mathcal{H}_{\sigma, R}(\tau) \\
\qquad \qquad \qquad h\longmapsto h_{|_{K}}
\end{gather*}
and the map
\begin{gather*}
\mathcal{H}_{\sigma, R}(\tau) \longrightarrow \mathcal{H}_{\sigma,\ell_0+\ell}(\tau)\\
f\longmapsto \psi^{\ell}\tilde{f},
\end{gather*}
where $\tilde{f}$ is as in Lemma \ref{lemma 4}. This two maps are inverses one from the other. 
From the previous theorem for each 
$f\in\mathcal{H}_{\sigma,R}(\tau)$ it  follows that
\begin{equation}\label{conjugar con Res}
\lim_{\ell\rightarrow \infty} \left\| \left[ Res_{\ell_0+\ell}\circ \Phi_{\rho_{\sigma,\ell_0+\ell}}^{\tau, \SO(n+1)}(D_{\ell/R}(\cdot)) \circ (Res_{\ell_0+\ell})^{-1}-\Phi_{\omega_{\sigma,R}}^{\tau, \M(n)} (\cdot)\right](f)\right\|_{L^2(\SO(n))}=0,
\end{equation}
where the convergence is uniform on compact sets of $\M(n)$.
As we saw in the Remark \ref{clases de conj de func esf}, for all $\ell\in \Z_{\geq 0}$, the functions $\Phi_{\rho_{\sigma,\ell_0+\ell}}^{\tau, \SO(n+1)}$ and $Res_{\ell_0+\ell}\circ [ \Phi_{\rho_{\sigma,\ell_0+\ell}}^{\tau, \SO(n+1)}(\cdot)] \circ (Res_{\ell_0+\ell})^{-1}$ represent the same spherical function. 
Now, using the isomorphism $\mathcal{H}_{\sigma,R}(\tau)\simeq V_\tau$  and again the Remark \ref{clases de conj de func esf}, the 
limit given in (\ref{conjugar con Res}) 
can be rewritten as
\begin{equation}
\lim_{\ell\rightarrow \infty} \left\| \left[  \Phi_{{\rho}
_{\sigma,\ell_0+\ell}}^{\tau, \SO(n+1)}(D_{\ell/R}(\cdot)) -\Phi_{\omega_{\sigma,R}}^{\tau, \M(n)} (\cdot)\right](v)\right\|_{V_\tau}=0 \qquad \text{ for all } v\in V_\tau,
\end{equation}
where $\|\cdot\|_{V_\tau}$ is a norm on the finite-dimensional vector space $V_\tau$ and the limit is  uniform on compact sets of $\M(n)$.  
\\ \\
Therefore we have proved the following theorem.

\begin{theorem} \label{coro}
Let $(\tau,V_\tau)\in\widehat{\SO(n)}$ and  let $\Phi^{\tau,\M(n)}$ be a spherical function of type $\tau$ of the strong Gelfand pair $(\M(n),\SO(n))$.
There exists a sequence $\{\Phi_{\ell}^{\tau, \SO(n+1)}\}_{\ell\in\Z_{\geq 0}}$ of spherical functions of type $\tau$ of the strong Gelfand pair $(\SO(n+1),\SO(n))$ and  a contraction  $\{D_\ell \}_{\ell\in\Z_{\geq 0}}$ of  $\SO(n+1)$ to $\M(n)$  such that
\begin{equation*}
\lim_{\ell\rightarrow \infty} \Phi_\ell^{\tau, \SO(n+1)}(D_{\ell}(k,x)) =\Phi^{\tau, \M(n)} (k,x),
\end{equation*}
where the convergence is point-wise on  $V_\tau$ and it is  uniform on compact sets of $\M(n)$. 
\end{theorem}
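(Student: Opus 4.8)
The plan is to assemble Theorem \ref{coro} directly from the hard analytic input established just before it, namely the approximation statement for spherical functions acting on the model spaces together with the conjugation-invariance discussion. First I would fix $\Phi^{\tau,\M(n)}$. By the description of $\widehat{\M(n)}$ in Section \ref{rep M(n)}, every spherical function of type $\tau$ that enters the inversion formula comes from a representation $\omega_{\sigma,R}$ with $\sigma\subset\tau$ as $\SO(n-1)$-module; so I may write $\Phi^{\tau,\M(n)}=\Phi^{\tau,\M(n)}_{\omega_{\sigma,R}}$ for suitable $\sigma$ and $R>0$. This choice of $\sigma$ determines the contracting sequence $\{\rho_{\sigma,\ell}\}_\ell$ of irreducible representations of $\SO(n+1)$, hence the sequence of spherical functions $\{\Phi^{\tau,\SO(n+1)}_{\rho_{\sigma,\ell_0+\ell}}\}_{\ell\in\Z_{\geq0}}$ from \eqref{sq sph func de so}, and the contraction maps $\{D_{\ell/R}\}$ from \eqref{contaction}; reindexing, $D_\ell := D_{\ell/R}$ gives the required contraction of $\SO(n+1)$ to $\M(n)$ (the contraction property was verified in Section on Contraction via Baker--Campbell--Hausdorff, and rescaling the parameter does not affect it).

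Next I would recall that the preceding Theorem produces, for each $f\in\mathcal{H}_{\sigma,R}(\tau)$, the unique $\tilde f\in\mathcal{H}_{\sigma,\ell_0}(\tau)$ of Lemma \ref{lemma 4} with
\begin{equation*}
\lim_{\ell\to\infty}\left\|\left(\Phi^{\tau,\SO(n+1)}_{\rho_{\sigma,\ell_0+\ell}}(D_{\ell/R}(k,x))(\psi^\ell\tilde f)\right)_{|_{\SO(n)}}-\Phi^{\tau,\M(n)}_{\omega_{\sigma,R}}(k,x)(f)\right\|_{L^2(\SO(n))}=0,
\end{equation*}
uniformly for $(k,x)$ in $\SO(n)\times B$ for every compact $B\subset\R^n$. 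The point is to turn this into a statement about a single endomorphism-valued function on a fixed vector space. To do so I would use that $Res_{\ell_0+\ell}$ and $f\mapsto\psi^\ell\tilde f$ are mutually inverse $K$-module isomorphisms between $\mathcal{H}_{\sigma,\ell_0+\ell}(\tau)$ and $\mathcal{H}_{\sigma,R}(\tau)$ (Remark \ref{remark rep M(n) para el aprox teo}, Lemmas \ref{lemma 4} and \ref{lemma 5}), so conjugating $\Phi^{\tau,\SO(n+1)}_{\rho_{\sigma,\ell_0+\ell}}(D_{\ell/R}(\cdot))$ by $Res_{\ell_0+\ell}$ yields an operator on $\mathcal{H}_{\sigma,R}(\tau)$ which, by Remark \ref{clases de conj de func esf}$(i)$, represents the same spherical function of $\SO(n+1)$. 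The displayed $L^2$-limit then becomes exactly \eqref{conjugar con Res}. Finally, identifying $\mathcal{H}_{\sigma,R}(\tau)\simeq V_\tau$ and invoking Remark \ref{clases de conj de func esf} once more, I transport everything to $\End(V_\tau)$; since $V_\tau$ is finite-dimensional, any two norms on it are equivalent, so the $L^2(\SO(n))$-convergence forces pointwise convergence on $V_\tau$ in the norm $\|\cdot\|_{V_\tau}$, uniformly on compact subsets of $\M(n)$ (compact subsets of $\M(n)=\SO(n)\ltimes\R^n$ being contained in some $\SO(n)\times B$). Setting $\Phi^{\tau,\SO(n+1)}_\ell:=\Phi^{\tau,\SO(n+1)}_{\rho_{\sigma,\ell_0+\ell}}$ (trivially extended for the finitely many indices below $\ell_0$) delivers the claimed sequence.

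The only genuine content beyond bookkeeping is that the hard analytic work — passing from matrix coefficients of $\rho_{\sigma,\ell}$ to those of $\omega_{\sigma,R}$ under the contraction, with uniformity on compacta — is already done by Dooley and Rice's Theorem 1 / Corollary 1 quoted in Section \ref{sec rdos de Dooley} and then upgraded to spherical functions by convolving with $d_\tau\overline{\chi_\tau}$ in the preceding Theorem. So I expect the main obstacle here to be purely organizational: carefully tracking the three successive identifications (restriction to $K$, the $K$-isomorphisms $Res_{\ell_0+\ell}$ and multiplication by $\psi^\ell$, and $\mathcal{H}_{\sigma,R}(\tau)\simeq V_\tau$) so that at each stage the object one writes down is literally the same spherical function, and checking that the reindexing $D_\ell=D_{\ell/R}$ still satisfies Definition \ref{def contraction}. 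No new estimate is needed; the proof is essentially a reindexing plus two applications of Remark \ref{clases de conj de func esf}, followed by the elementary remark that all norms on a finite-dimensional space are equivalent.
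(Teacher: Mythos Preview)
Your proposal is correct and follows essentially the same route as the paper: the paper's argument for Theorem \ref{coro} is precisely the paragraph preceding it, which passes from the previous approximation theorem to \eqref{conjugar con Res} via the mutually inverse maps $Res_{\ell_0+\ell}$ and $f\mapsto\psi^\ell\tilde f$, then transports everything to $\End(V_\tau)$ by two applications of Remark \ref{clases de conj de func esf}. You have reproduced exactly these steps, adding only the harmless bookkeeping remarks about reindexing the contraction and the equivalence of norms on $V_\tau$.
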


\begin{remark}
We emphasize that the above result is independent from the model chosen for the representations that parametrize the spherical functions.
\end{remark}


\section{The approximation theorem in the dual case}

In this paragraph we will  consider first a general framework. Let $G$ be connected Lie group with Lie algebra $\mathfrak{g}$ and $K$ be a closed subgroup with Lie algebra $\mathfrak{k}$. The coset space $G/K$ is called reductive if $\mathfrak{k}$ admits  an $\Ad_G(K)$-invariant complement $\mathop{p}$ in $\mathfrak{g}$. 
In this case it can be form the semidirect product $K\ltimes\mathop{p}$ with respect to the adjoint action of $K$ on $\mathop{p}$. We will restrict ourselves to the case where $G$ is semisimple with finite center. In particular, let $\theta$ be an analytic involution on $G$ such that $(G,K)$ is a Riemnannian symmetric pair, that is, $K$ is contained in the fixed point set $K_\theta$ of the involution $\theta$, it contains the connected component of the identity and $\Ad_G(K)$ is compact. The subalgebra $\mathfrak{k}$ is the $+1$ eigenspace of $d\theta_e$ and naturally we can choose $\mathop{p}$ as the $-1$ eigenspace. Furthermore, we will just consider $G$ non compact. In this case $K$ is compact and connected \cite[p. 252]{Helgason} and $d\theta_e$ is a Cartan involution, so $\mathfrak{g}=\mathfrak{k}\oplus\mathop{p}$ is called a Cartan decomposition \cite[p. 182]{Helgason}. 
The semidirect product $K\ltimes \mathop{p}$ is called the \textit{Cartan motion group} associated to the pair $(G,K)$.
\\ \\
The unitary dual $\widehat{K\ltimes \mathop{p}}$ can be described as the one given in Section \ref{rep M(n)}. First one must fix a character of $ \mathop{p}$.
Any character of $\mathop{p}$ can be uniquely expressed as $e^{i\phi(x)}$ for a linear functional $\phi\in\mathop{p}^{*}$. Then one must consider 
$$K_\phi:=\left\{k\in K | \ e^{i\phi(\Ad(k^{-1})x)}=e^{i\phi(x)} \ \forall x\in \mathop{p}\right\}$$
and $(\sigma,H_\sigma)\in \widehat{K_\phi}$.
After that we get an irreducible unitary representation $\omega_{\sigma,\phi}$ of $K\ltimes \mathop{p}$ inducing $\sigma\otimes e^{i\phi(\cdot)}$ from $K_{\phi}\ltimes \mathop{p}$ to $K\ltimes \mathop{p}$.
By definition $\omega_{\sigma,\phi}$ acts by left translations on a space of functions $f:K\ltimes \mathop{p}\longrightarrow H_\sigma$ satisfying
\begin{equation*}
f(gxm)=e^{-i\phi(x)}\sigma(m)^{-1}f(g) \qquad \forall x\in \mathop{p}, \ m\in K_\phi \text{ and } g\in K\ltimes \mathop{p}.
\end{equation*}
Consequently,
\begin{equation*}
f(xk)=f(k\Ad(k^{-1})x)=e^{-i\phi(\Ad(k^{-1})x)}f(k) \qquad \forall x\in \mathop{p}, \  k\in K,
\end{equation*}
so any such $f$ is completely determined by its restriction to $K$. Therefore, for the representation $\omega_{\sigma,\phi}$ we can consider only those functions whose restrictions to $K$ lie on $L^2(K,H_\sigma)$. This space comprise the close subspace $H_{\omega_{\sigma,\phi}}$ of $L^2(K,H_\sigma)$
\begin{equation*}
H_{\omega_{\sigma,\phi}}:=\left\{f\in L^2(K,H_\sigma) | \  f(km)=\sigma(m)^{-1}f(k) \ \forall m\in K_\phi, k\in K \right\}
\end{equation*}
and $\omega_{\sigma,\phi}$ acts on $H_{\sigma,\phi}$  by 
\begin{equation*}
\left(\omega_{\sigma,\phi}(k,x)f\right)(k_0):=e^{i\phi(\Ad(k_0^{-1})x)}f(k^{-1}k_0).
\end{equation*}
Every irreducible unitary representation of $K\ltimes \mathop{p}$ occurs in this way and two irreducible unitary representations $\omega_{\sigma_1,\phi_1}$
and $\omega_{\sigma_2,\phi_2}$ are unitarily equivalent if and only if 
\begin{itemize}
\item $\phi_1$ and $\phi_2$ lie in the same coadjoint orbit of $K$ and
\item $\sigma_1$ and $\sigma_2$ are unitarily equivalent.
\end{itemize}
Because $K$ is compact we can endow $\mathop{p}$ with an $\Ad(K)$-invariant inner product $\langle\cdot,\cdot\rangle$ (for example, the Killing form restricted to $\mathop{p}$) and via $\langle\cdot,\cdot\rangle$ we identify $\mathop{p}$ with $\mathop{p^{*}}$ and the adjoint with the coadjoint action of $K$. Let $\mathfrak{a}\subset\mathop{p}$ be a maximal abelian subalgebra of $\mathop{p}$. Every adjoint orbit of $K$ in $\mathop{p}$ intersects $\mathfrak{a}$ (\cite[p. 247]{Helgason}). Hence every irreducible unitary representation of $K\ltimes\mathop{p}$ has the form $\omega_{\sigma,\phi}$ with $\phi(x)=\langle H, x \rangle$ for some $H\in \mathfrak{a}$, that is, we are allowed to suppose $\phi\in\mathfrak{a}^*$. Therefore, $K_\phi$ coincides with the stabilizer of $H$ under the adjoint action of $K$. Let $M$ be the centralizer of $\mathfrak{a}$ in $K$. We say that $\omega_{\sigma,\phi}\in\widehat{K\ltimes \mathop{p}}$ is \textit{generic} if $K_\phi=M$. Since the set of  non generic irreducible unitary representations of $K\ltimes \mathop{p}$
has zero Plancherel measure, we shall be concerned with the generic cases. That is we will consider
\begin{equation}\label{rep generica}
    \omega_{\sigma,\phi}=\Ind_{M\ltimes \mathop{p}}^{K\ltimes \mathop{p}}(\sigma\otimes e^{i\phi(\cdot)}) \qquad (\sigma\in\widehat{M}, \phi\in\mathfrak{a}^*).
\end{equation}
\noindent From the other side, let $G=KAN$ be the Iwasawa decomposition of $G$, where $A:=\exp_G(\mathfrak{a})$. Let $(\sigma,H_\sigma)\in\widehat{M}$. Let $\gamma\in \mathfrak{a}^*\otimes \C$ such that $\gamma=\phi+i\nu$ where $\phi\in \mathfrak{a}^*$ and $\nu\in \mathfrak{a}^*$ is the particular linear map $\nu:=\frac{1}{2}\sum_{r\in P^+}c_rr$ where $P^+$ is the set of positive restricted roots and $c_r$ is the multiplicity of the root $r$. Let $1_N$ denote the trivial representation of $N$. A principal series representation $\rho_{\sigma,\phi}$ of $G$ can be given by inducing $\gamma\otimes\sigma\otimes 1_N$ from $MAN$ to $KAN=G$, that is,
\begin{equation}\label{rep serie ppal}
    \rho_{\sigma,\phi}=\Ind_{MAN}^{G}(\gamma\otimes\sigma\otimes 1_N) \qquad (\sigma\in\widehat{M}, \phi\in\mathfrak{a}^*).
\end{equation}
\noindent As such, it is realised on a space of functions $F:G\longrightarrow H_\sigma$ satisfying
\begin{equation}\label{cond noncompact rep}
f(gman)=e^{-i\gamma(\log(a))}\sigma(m)^{-1}f(g) \qquad \forall g\in G, \ man\in MAN.
\end{equation}
By the Iwasawa decomposition such functions are clearly determined by their restrictions to $K$. A principal series representation give rise to a unitary representation when its representation space $H_{\rho_{\sigma,\phi}}$ consist of functions satisfying  \eqref{cond noncompact rep} and whose restrictions to $K$ lie in $L^2(K,H_\sigma)$. These restrictions comprise the subspace of $L^2(K,H_\sigma)$ whose functions $f$ satisfy
\begin{equation*}
f(km)=\sigma(m)^{-1}f(k) \qquad \forall k\in K, m\in M. 
\end{equation*}
Note that $H_{\omega_{\sigma,\phi}}$ coincides with $\left(H_{\rho_{\sigma,\phi}}\right)_{|_{K}}$.
\\ \\
Given any generic irreducible unitary representation $\omega_{\sigma,\phi}$ of $K\ltimes \mathop{p}$,  we can associate the sequence $\left\{\rho_{\sigma,\ell\phi}\right\}_{\ell=1}^\infty$ of unitary principal series representations of $G$.
As in \eqref{contaction} we consider the contraction maps $\{D_\beta\}_{\beta\in\R_{>0}}$
\begin{gather}\label{contraction 2} 
D_\beta: K\ltimes \mathop{p}\longrightarrow G \notag \\
 D_{\beta}(k,x):=  \exp_G(\frac{1}{\beta}x) \ k.
\end{gather}
As in Section \ref{A special character and a special function} we consider the special function 
\begin{gather}\label{s phi}
s_\phi: G\longrightarrow \C \notag \\
 s_\phi(kan):= e^{-i\phi(\log(a))},
\end{gather}
which is $K$-invariant and has value $1$ on $K$. We have that, if $f\in H_{\rho_{\sigma,\ell\phi}}$, then $s_\phi(f)\in H_{\rho_{\sigma,(\ell+1)\phi}}$ and $s_\phi(f)$ has the same restriction to $K$ as $f$.
The following result, due to Dooley and Rice, show how 
the sequence $\left\{\rho_{\sigma,\ell\phi}\right\}_{\ell=1}^\infty$ approximates $\omega_{\sigma,\phi}$. 
\begin{theorem} \cite[Theorem 1 and Corollary (4.4)]{Dooley2}\label{teo dooley 2}
For all $(k,x)\in K\ltimes \mathop{p}$ and $F\in H_{\rho_{\sigma,\phi}}$
\begin{equation}\label{D R paper 2}
\lim_{\ell\rightarrow\infty}\left\|
\left(\rho_{\sigma,\ell\phi} (D_{\ell}(k,x))(s_\phi^\ell F)\right)_{|_{K}} - 
\omega_{\sigma, \phi}(k,x)(F_{|_{K}})
\right\|_{L^2(K,H_\sigma)}=0.
\end{equation}
Moreover, if $F$ is a smooth function, the convergence is uniform on compact subsets of $K\ltimes\mathop{p}$.
\end{theorem}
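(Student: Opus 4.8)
The plan is to pass to the compact realisation of the principal series $\rho_{\sigma,\ell\phi}$ and to reduce the whole statement to a first--order expansion of the Iwasawa projection near the identity of $G$; this expansion plays here the role that the Baker--Campbell--Hausdorff formula played in the compact case $(\SO(n+1),\SO(n))$. First, I would set up the compact picture. The restriction map $F\mapsto F_{|_K}$ identifies every $H_{\rho_{\sigma,\ell\phi}}$ with one and the same closed subspace
\begin{equation*}
\mathcal H:=\bigl\{f\in L^2(K,H_\sigma)\ :\ f(km)=\sigma(m)^{-1}f(k)\ \ \forall m\in M\bigr\}=H_{\omega_{\sigma,\phi}},
\end{equation*}
and under this identification $\rho_{\sigma,\ell\phi}(g)$ becomes the operator $U_\ell(g)$ on $\mathcal H$ given, via the Iwasawa decomposition $g^{-1}k_0=\kappa(g^{-1}k_0)\exp_G\!\bigl(H(g^{-1}k_0)\bigr)\,n(g^{-1}k_0)$, by
\begin{equation*}
\bigl(U_\ell(g)f\bigr)(k_0)=e^{-i\gamma_\ell(H(g^{-1}k_0))}\,f\bigl(\kappa(g^{-1}k_0)\bigr),\qquad \gamma_\ell=\ell\phi+i\nu .
\end{equation*}
Because $\ell\phi$ is real, each $U_\ell(g)$ is unitary on $\mathcal H$; and since $s_\phi\equiv1$ on $K$, multiplication by $s_\phi$ is the identity in the compact picture, so the operator appearing in \eqref{D R paper 2}, namely $f\mapsto\bigl(\rho_{\sigma,\ell\phi}(D_\ell(k,x))(s_\phi^\ell F)\bigr)_{|_K}$, is exactly $U_\ell(D_\ell(k,x))$ (the principal--series parameter attached to $s_\phi^\ell F$ differs from $\ell\phi+i\nu$ only by a bounded multiple of $\phi$, which will not affect the limit). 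On the other hand, $\omega_{\sigma,\phi}(k,x)$ is the unitary operator $\bigl(\omega_{\sigma,\phi}(k,x)f\bigr)(k_0)=e^{i\phi(\Ad(k_0^{-1})x)}f(k^{-1}k_0)$ of Section \ref{rep M(n)}. Thus \eqref{D R paper 2} reduces to showing that $U_\ell(D_\ell(k,x))\to\omega_{\sigma,\phi}(k,x)$ strongly on $\mathcal H$, uniformly for $(k,x)$ in compact sets.

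Next I would make $U_\ell(D_\ell(k,x))$ explicit and carry out the asymptotics. With $g=D_\ell(k,x)=\exp_G(\tfrac1\ell x)k$ one has $g^{-1}k_0=k^{-1}k_0\,\exp_G\!\bigl(-\tfrac1\ell Y\bigr)$ where $Y=Y(k_0):=\Ad(k_0^{-1})x\in\mathop{p}$; writing $\exp_G(-\tfrac1\ell Y)=\kappa_\ell(Y)\exp_G(H_\ell(Y))\,n_\ell(Y)$ and using $k^{-1}k_0\kappa_\ell(Y)\in K$,
\begin{equation*}
\bigl(U_\ell(D_\ell(k,x))f\bigr)(k_0)=e^{-i\gamma_\ell(H_\ell(Y))}\,f\bigl(k^{-1}k_0\,\kappa_\ell(Y)\bigr).
\end{equation*}
The key input is the behaviour of the Iwasawa projection at $e$: as $\ell\to\infty$, uniformly for $Y$ in compact subsets of $\mathop{p}$,
\begin{equation*}
\kappa_\ell(Y)\longrightarrow e,\qquad n_\ell(Y)\longrightarrow e,\qquad \ell\,H_\ell(Y)\longrightarrow -\,\mathrm{pr}_{\mathfrak a}(Y),
\end{equation*}
where $\mathrm{pr}_{\mathfrak a}\colon\mathfrak g\to\mathfrak a$ is the projection along $\mathfrak k\oplus\mathfrak n$, which on $\mathop{p}$ is the orthogonal projection onto $\mathfrak a$. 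This follows by differentiating $t\mapsto\exp_G(tY)$ at $t=0$ and splitting along $\mathfrak g=\mathfrak k\oplus\mathfrak a\oplus\mathfrak n$: one gets $H_\ell(Y)=-\tfrac1\ell\,\mathrm{pr}_{\mathfrak a}(Y)+O(1/\ell^2)$ with the remainder uniform on compacta, so that multiplying by $\ell$ is harmless. Hence $\gamma_\ell(H_\ell(Y))=\phi\bigl(\ell H_\ell(Y)\bigr)+i\nu\bigl(H_\ell(Y)\bigr)\to-\phi(\mathrm{pr}_{\mathfrak a}(Y))=-\phi(Y)=-\phi(\Ad(k_0^{-1})x)$, using that $\phi=\langle H,\cdot\rangle$ with $H\in\mathfrak a$ (so $\phi$ vanishes on the $\langle\cdot,\cdot\rangle$-orthogonal complement of $\mathfrak a$ in $\mathop{p}$); consequently $e^{-i\gamma_\ell(H_\ell(Y))}\to e^{i\phi(\Ad(k_0^{-1})x)}$, with modulus $e^{\nu(H_\ell(Y))}$ bounded. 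The $i\nu$--part, which carries unitarity, simply disappears in the limit because $H_\ell(Y)\to0$.

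Finally I would assemble the limit and remove the smoothness hypothesis. If $F\in H_{\rho_{\sigma,\phi}}$ is smooth then $f:=F_{|_K}$ is continuous, hence uniformly continuous on the compact group $K$; since the set $\{\Ad(k_0^{-1})x:k_0\in K,\ (k,x)\in C\}$ is compact in $\mathop{p}$ for each compact $C\subset K\ltimes\mathop{p}$ and $\kappa_\ell(Y)\to e$ uniformly there, $f\bigl(k^{-1}k_0\,\kappa_\ell(Y(k_0))\bigr)\to f(k^{-1}k_0)$ uniformly in $k_0$ and in $(k,x)\in C$; the $M$--component hidden inside $\kappa_\ell$ is harmless since $\kappa_\ell\to e$, so $\sigma$ is evaluated near $\sigma(e)=\mathrm{Id}$. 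Together with the previous step this gives $\bigl(U_\ell(D_\ell(k,x))f\bigr)(k_0)\to\bigl(\omega_{\sigma,\phi}(k,x)f\bigr)(k_0)$ uniformly in $k_0\in K$ and $(k,x)\in C$, and uniform convergence on the compact group $K$ forces convergence in $L^2(K,H_\sigma)$; this is the ``moreover'' clause. For an arbitrary $F\in H_{\rho_{\sigma,\phi}}$ one runs a three--$\varepsilon$ argument: the $U_\ell(D_\ell(k,x))$ and $\omega_{\sigma,\phi}(k,x)$ all have operator norm $\le1$ on $\mathcal H$ and the smooth vectors are dense in $\mathcal H$, so the convergence just established on the dense set propagates to all of $\mathcal H$. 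I expect the main obstacle to be the displayed Iwasawa asymptotics: the uniform first--order control of the Iwasawa projection at the identity, together with the cancellation by which the vanishing $\mathfrak a$--component $H_\ell(Y)$ exactly compensates the growth of the parameter $\ell\phi$ and leaves the finite phase $e^{i\phi(\Ad(k_0^{-1})x)}$.
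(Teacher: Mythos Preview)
The paper does not prove this theorem at all: it is quoted verbatim as \cite[Theorem~1 and Corollary~(4.4)]{Dooley2} and used as a black box in the derivation of \eqref{D R paper 2 reescrita} and \eqref{D R paper 2 reescrita 2}. So there is no ``paper's own proof'' to compare against.

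That said, your argument is correct and is essentially the standard one (and, as far as I can tell, the one Dooley and Rice themselves give). The reduction to the compact picture is exactly right: all the $H_{\rho_{\sigma,\ell\phi}}$ restrict to the same subspace $\mathcal H\subset L^2(K,H_\sigma)$, and in that realisation the whole content of the theorem becomes the strong convergence $U_\ell(D_\ell(k,x))\to\omega_{\sigma,\phi}(k,x)$. Your computation $g^{-1}k_0=k^{-1}k_0\exp_G(-\tfrac1\ell\Ad(k_0^{-1})x)$ and the first--order Iwasawa expansion $H_\ell(Y)=-\tfrac1\ell\,\mathrm{pr}_{\mathfrak a}(Y)+O(1/\ell^2)$ are the heart of the matter, and the identification $\phi(\mathrm{pr}_{\mathfrak a}Y)=\phi(Y)$ (because $\phi=\langle H,\cdot\rangle$ with $H\in\mathfrak a$) is precisely what makes the phase come out as $e^{i\phi(\Ad(k_0^{-1})x)}$. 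The observation that the $i\nu$ term disappears since $H_\ell(Y)\to0$, and that the off--by--one discrepancy between the parameter of $s_\phi^\ell F$ and $\ell\phi$ is harmless in the limit, are both well spotted. The passage from smooth $F$ (uniform convergence on $K$, hence $L^2$) to arbitrary $F$ via unitarity and density is routine and correct.

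Two very minor remarks. First, your claim that ``each $U_\ell(g)$ is unitary on $\mathcal H$'' is true, but it is the $i\nu$--shift in $\gamma_\ell$ (not merely the reality of $\ell\phi$) that produces the Jacobian factor needed for unitarity in the compact picture; you use unitarity only through $\|U_\ell\|\le1$, so this does not affect the argument. Second, the uniformity of the $O(1/\ell^2)$ remainder on compacta follows from the smoothness of the Iwasawa projection $g\mapsto H(g)$ near $e$; you state this but it is worth saying explicitly, since it is what lets you multiply by $\ell$ without losing control.
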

\bigskip
 \noindent Let $\tau\in\widehat{K}$. It follows from Frobenius reciprocity that $\tau\subset\left(\omega_{\sigma,\phi}\right)_{|_{K}}$ and that $\tau\subset\left(\rho_{\sigma,\ell\phi}\right)_{|_{K}}$  if and only if $\sigma\subset\tau_{|_{M}}$. In particular,
 \begin{equation}\label{multiplicity}
     m(\tau,\omega_{\sigma,\phi})=m(\sigma,\tau)=m(\tau,\rho_{\sigma,\phi})
 \end{equation}
\noindent We fix $\omega_{\sigma,\phi}\in \widehat{K\ltimes \mathop{p}}$ such that $\tau$ is a $K$-submodule of $\omega_{\sigma,\phi}$.
\\ \\
Consider the restriction operator $$Res_{\ell\phi}(F):=F_{|_{K}} \qquad \text{for all } F\in H_{\rho_{\sigma,\ell\phi}}.$$  Since the action of $\rho_{\sigma,\ell\phi}$ is by left translations it is obvious that $Res_{\ell\phi}$  intertwines   $H_{\rho_{\sigma,\ell\phi}}$ and $H_{\omega_{\sigma,\phi}}$ as $K$-modules. 
Moreover, $Res_{\ell\phi}$ sends $H_{\rho_{\sigma,\ell\phi}}(\tau)$ to $H_{\omega_{\sigma,\phi}}(\tau)$. 
Apart from that, observe that the multiplication by the function $s_\phi$ is an intertwining operator between $H_{\rho_{\sigma,\ell\phi}}$ and $H_{\rho_{\sigma,(\ell+1)\phi}}$ as $K$-modules (for all $\ell\in \N$). 
\\ \\
Now, let $f\in H_{\omega_{\sigma,\phi}}$, we extend it to $G$ by
\begin{equation}\label{extention}
F(g)=F(k_g a_g n_g):=e^{-i\gamma(\log(a_g))}f(k_g),
\end{equation}
where  $g=k_g a_g n_g$ with $k_g\in K$, $a_g\in A$ and 
$n_g\in N$ is  the Iwasawa decomposition of $g\in G$.  The inverse of the restriction map defined previously is $Res^{-1}_{\ell\phi}(f):=(s_\phi)^\ell F$ for all $f\in H_{\omega_{\sigma,\phi}}(\tau)$ where $F$ is defined as \eqref{extention}.
\\ \\
With all this in mind the Theorem \ref{teo dooley 2} can be rewritten in the following way: For all $(k,x)\in K\ltimes\mathop{p}$ and  $f\in H_{\omega_{\sigma,\phi}}(\tau)$,   
\begin{equation}\label{D R paper 2 reescrita}
\lim_{\ell\rightarrow\infty}\left\|\left(
Res_{\ell\phi}\circ\rho_{\sigma,\ell\phi}\left( D_{\ell}(k,x)\right)\circ Res_{\ell\phi}^{-1}  - 
\omega_{\sigma, \phi}(k,x)\right)(f)
\right\|_{L^2(K,H_\sigma)}=0.
\end{equation}
Finally, by  \eqref{projection}, 
the projections $P_{\omega_{\sigma,\phi}}^\tau$
and $P_{\rho_{\sigma,\ell\phi}}^\tau$ are given by the same formula, i.e., by the convolution on $K$ with $d_\tau \overline{\chi_\tau}$. Moreover, they are continuous operators. Therefore, from \eqref{D R paper 2 reescrita} we get the asymptotic formula 
\begin{equation}\label{D R paper 2 reescrita 2}
\lim_{\ell\rightarrow\infty}\left\|\left(
P_{\rho_{\sigma,\ell\phi}}^\tau\circ Res_{\ell\phi}\circ\rho_{\sigma,\ell\phi}\left( D_{\ell}(k,x)\right)\circ Res_{\ell\phi}^{-1}  - P_{\omega_{\sigma,\phi}}^\tau\circ
\omega_{\sigma, \phi}(k,x)\right)(f)
\right\|_{L^2(K,H_\sigma)}=0.
\end{equation}

\begin{proposition}\label{prop con lo de antes}
  Let $G$ be a connected, non compact semisimple Lie group and $K$ be a closed subgroup of $G$ such that $(G,K)$ is a Riemannian symmetric pair. Let  $K\ltimes \mathop{p}$ be the Cartan motion group associated to $(G,K)$ and let $\tau\in\widehat{K}$. The triple $(G,K, \tau)$ is commutative if and only if $(K\ltimes \mathop{p},K,\tau)$ is commutative. In particular, $(G,K)$ is a strong Gelfand pair if and only if $(K\ltimes \mathop{p},K)$ is a strong Gelfand pair.
\end{proposition}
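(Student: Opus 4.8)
The plan is to reduce the commutativity of each triple to a multiplicity statement and to show that the two relevant multiplicities agree. Recall that a triple $(H,K,\tau)$ is commutative precisely when $m(\tau,\pi)\leq 1$ for every $\pi\in\widehat{H}$; this is exactly the hypothesis introduced in Section 2.1. So it suffices to prove: every irreducible unitary representation of $K\ltimes\mathop{p}$ contains $\tau$ at most once if and only if every irreducible unitary representation of $G$ contains $\tau$ at most once. The decomposition of the unitary dual in both cases is explicitly parametrized (Mackey orbital analysis for $K\ltimes\mathop{p}$ and the principal series together with the complementary/discrete series for $G$), so I would attack this parameter family by family.

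\textbf{Step 1: Reduce to the generic / principal series.} On the $K\ltimes\mathop{p}$ side, the non-generic representations $\omega_{\sigma,\phi}$ with $K_\phi\supsetneq M$ have zero Plancherel measure, but for commutativity one genuinely must check \emph{all} of $\widehat{K\ltimes\mathop{p}}$, not just the generic part. However, for any $\phi$ the representation $\omega_{\sigma,\phi}=\Ind_{K_\phi\ltimes\mathop{p}}^{K\ltimes\mathop{p}}(\sigma\otimes e^{i\phi(\cdot)})$, and Frobenius reciprocity gives $m(\tau,\omega_{\sigma,\phi})=m(\tau|_{K_\phi},\sigma)$. So the condition ``$m(\tau,\pi)\leq1$ for all $\pi\in\widehat{K\ltimes\mathop{p}}$'' is equivalent to ``$m(\tau|_{L},\sigma)\leq1$ for every $\sigma\in\widehat{L}$ and every subgroup $L$ of the form $K_\phi$, $\phi\in\mathop{p}^*$''. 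Since $M=K_\phi$ for generic $\phi$ and $M\subseteq K_\phi$ in general, the hardest constraint is at the \emph{smallest} such stabilizer, but one must be careful: a larger $K_\phi$ is not automatically subsumed. I would argue that it is enough to treat $L=M$, using that any $K_\phi$ arises as the stabilizer of a point of $\mathfrak a$ and hence is (conjugate to) a subgroup containing $M$ that is itself a centralizer-type subgroup; restriction $\tau|_{K_\phi}$ then further restricts to $\tau|_M$, and multiplicity-freeness of $\tau|_M$ forces multiplicity-freeness of $\tau|_{K_\phi}$ when the $K_\phi$-types embed compatibly. (This monotonicity is the one spot needing genuine care.) On the $G$ side, Harish-Chandra's subquotient theorem says every $\pi\in\widehat{G}$ is a subquotient of some principal series $\rho_{\sigma,\phi}$, and a $K$-type appears in a subquotient at most as often as in the whole principal series; combined with \eqref{multiplicity}, $m(\tau,\rho_{\sigma,\phi})=m(\sigma,\tau|_M)$, so again everything is governed by $m(\sigma,\tau|_M)$.

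\textbf{Step 2: Identify the common criterion.} Both sides have now been reduced to the single statement ``$m(\sigma,\tau|_M)\leq1$ for all $\sigma\in\widehat M$'', i.e. $\tau|_M$ is multiplicity free. So I would state and use: $(G,K,\tau)$ is commutative $\iff$ $\tau|_M$ is multiplicity free $\iff$ $(K\ltimes\mathop{p},K,\tau)$ is commutative. This is precisely the content of \eqref{multiplicity}, which already records $m(\tau,\omega_{\sigma,\phi})=m(\sigma,\tau)=m(\tau,\rho_{\sigma,\phi})$ for generic/principal parameters; the new work is the reduction in Step 1 from ``all of the dual'' to ``generic/principal only''. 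The ``in particular'' clause about strong Gelfand pairs is then immediate by quantifying over all $\tau\in\widehat K$.

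\textbf{Main obstacle.} The delicate point is Step 1 on the $K\ltimes\mathop{p}$ side: handling the non-generic representations $\omega_{\sigma,\phi}$ with $K_\phi$ strictly larger than $M$, since commutativity requires bounding $m(\tau,\pi)$ for \emph{every} $\pi$, and these representations are not covered by \eqref{multiplicity}. The resolution I would use is the tower of inclusions $M\subseteq K_\phi\subseteq K$: by Frobenius $m(\tau,\omega_{\sigma,\phi})=m(\sigma,\tau|_{K_\phi})$, and I must show this is $\leq1$ whenever $\tau|_M$ is multiplicity free. This follows because the branching $\widehat{K_\phi}\ni\sigma\mapsto\sigma|_M$ need not be multiplicity free in general, so one cannot argue purely abstractly; instead one invokes the specific structure of symmetric pairs — $K_\phi$ is the centralizer in $K$ of an element of $\mathfrak a$, hence itself the ``$K$'' of a smaller symmetric pair with the same $M$ — to conclude $m(\sigma,\tau|_{K_\phi})\leq m(\sigma|_M,\tau|_M)\leq1$. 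Equally, on the $G$-side the subquotient argument needs the standard fact that passing to a Langlands subquotient does not increase $K$-multiplicities, which I would cite rather than reprove. Once these two monotonicity facts are in place the proposition is a two-line corollary of \eqref{multiplicity}.
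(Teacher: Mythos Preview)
Your approach is correct in spirit but genuinely different from the paper's, and in one spot you make life harder than necessary. The paper does \emph{not} attempt to control the full unitary dual on either side. Instead it invokes a cited criterion (\cite[Theorem~3]{Nosotros2}) to the effect that commutativity of the triple is equivalent to $m(\tau,\pi)\le 1$ only for those $\pi$ in the support of the Plancherel measure; since that support consists exactly of the generic $\omega_{\sigma,\phi}$ on the motion-group side and (the paper asserts) the principal series on the $G$ side, the equality $m(\tau,\omega_{\sigma,\phi})=m(\sigma,\tau|_M)=m(\tau,\rho_{\sigma,\phi})$ from \eqref{multiplicity} finishes the proof in one line. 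Your route --- the subquotient theorem on the $G$ side and a direct treatment of non-generic $\omega_{\sigma,\phi}$ on the $K\ltimes\mathop{p}$ side --- avoids relying on that external Plancherel-support criterion and is arguably more self-contained (and sidesteps the question of whether, when $G$ has discrete series, ``Plancherel measure concentrated on principal series'' is literally true).

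One correction to your Step~1 on the $K\ltimes\mathop{p}$ side: you do \emph{not} need any special structure of $K_\phi$ as the ``$K$'' of a smaller symmetric pair, and the inequality ``$m(\sigma,\tau|_{K_\phi})\le m(\sigma|_M,\tau|_M)$'' is ill-posed since $\sigma|_M$ is typically reducible. The argument is purely abstract and elementary: pick any $\mu\in\widehat M$ occurring in $\sigma|_M$; then by transitivity of restriction
\[
m(\mu,\tau|_M)\;\ge\; m(\mu,\sigma|_M)\cdot m(\sigma,\tau|_{K_\phi})\;\ge\; m(\sigma,\tau|_{K_\phi}),
\]
so multiplicity-freeness of $\tau|_M$ forces $m(\sigma,\tau|_{K_\phi})\le 1$ for every intermediate $K_\phi\supseteq M$. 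With this simplification your Step~1 goes through cleanly and the rest of your outline is correct.
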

\begin{proof}
The Plancherel measure of $K\ltimes \mathop{p}$ is concentrated on 
the set of generic irreducible unitary representations of  $K\ltimes \mathop{p}$. Respectively,  the Plancherel measure of $G$ is concentrated on 
the set of principal series representations. 
From \cite[Theorem 3]{Nosotros2}, $(K\ltimes\mathop{p},K,\tau)$ is a commutative triple if and only if $m(\tau,\omega)\leq 1$ for all $\omega$ in the subset of $\widehat{K\ltimes\mathop{p}}$ which has non-zero Plancherel measure. (This result is bases on the ideas given in \cite{BJR} for the case of a Gelfand pair). 
So we take arbitrary generic and principal series representations $\omega_{\sigma,\phi}\in\widehat{K\ltimes \mathop{p}}$ and $\rho_{\sigma,\phi}\in\widehat{G}$  as in \eqref{rep generica} and \eqref{rep serie ppal} respectively, for $\sigma\in \widehat{M}$ and $\phi\in\mathfrak{a}^*$. By \eqref{multiplicity}, $m(\tau,\omega_{\sigma,\phi})=m(\tau,\rho_{\sigma,\phi})$ and the conclusion of this proposition follows immediately.
\end{proof}

\begin{theorem}\label{teo dual}
Let $G$ be a connected, non compact semisimple Lie group and $K$ be a maximal compact subgroup of $G$ such that $(G,K)$ is a Riemannian symmetric pair. Let  $K\ltimes \mathop{p}$ be the Cartan motion group associated to $(G,K)$ and let $(\tau,V_\tau)\in\widehat{K}$ such that $(K\ltimes\mathop{p},K,\tau)$ is a commutative triple.
Let $\Phi_{\omega_{\sigma,\phi}}^{\tau}:K\ltimes\mathop{p}\longrightarrow\End(V_\tau)$ be the spherical function of type $\tau$  corresponding to $\omega_{\sigma,\phi}$. Then, there exists a family $\{\Phi_{\rho_{\sigma,\ell\phi}}^{\tau}\}_{\ell\in \Z_{\geq 0}}$ where $\Phi_{\rho_{\sigma,\ell\phi}}^{\tau}:G\longrightarrow\End(V_\tau)$ is a spherical function  of type $\tau$ corresponding to $\rho_{\sigma,\ell\phi}$ and such that for each $(k,x)\in K\ltimes\mathop{p}$
\begin{equation*}
\lim_{\ell\rightarrow \infty} \Phi_{\omega_{\sigma,\ell\phi}}^{\tau}(D_{\ell}(k,x)) =\Phi_{\rho_{\sigma,\ell\phi}}^{\tau} (k,x),
\end{equation*}
where the convergence is point-wise on  $V_\tau$. 
\end{theorem}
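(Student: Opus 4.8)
The strategy mirrors the passage leading to Theorem \ref{coro}: everything will be extracted from the asymptotic formula \eqref{D R paper 2 reescrita 2}, which is already established, simply by recognizing the operators occurring there as spherical functions of type $\tau$ read through the restriction map. The plan has two main steps --- first a reduction of \eqref{D R paper 2 reescrita 2} to a limit statement directly about the relevant spherical functions of type $\tau$, then a transfer of that statement to pointwise convergence of endomorphisms of $V_\tau$ --- preceded by a routine check that the family of spherical functions we produce is well defined.

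For the preliminary step, if $\tau\not\subset\omega_{\sigma,\phi}$ then every spherical function in sight is identically zero and there is nothing to prove, so I assume $\tau\subset\omega_{\sigma,\phi}$, i.e. $\sigma\subset\tau_{|_{M}}$. Then by \eqref{multiplicity} together with the commutativity of $(K\ltimes\mathop{p},K,\tau)$ (equivalently, Proposition \ref{prop con lo de antes}) one has $m(\tau,\rho_{\sigma,\ell\phi})=m(\sigma,\tau)=1$ for every $\ell$, so each $\rho_{\sigma,\ell\phi}$ carries a well-defined spherical function $\Phi_{\rho_{\sigma,\ell\phi}}^\tau$ of type $\tau$; this is the family we take. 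In contrast with Section 3, no threshold is needed here, because $s_\phi$ is globally defined on $G$: for every $f\in H_{\omega_{\sigma,\phi}}$ and every $\ell$ the element $Res_{\ell\phi}^{-1}(f)=s_\phi^\ell F$, with $F$ the Iwasawa extension \eqref{extention} of $f$, already lies in $H_{\rho_{\sigma,\ell\phi}}$.

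Now the reduction. Since $Res_{\ell\phi}:H_{\rho_{\sigma,\ell\phi}}\to H_{\omega_{\sigma,\phi}}$ is a $K$-module intertwiner which is bijective (a function in $H_{\rho_{\sigma,\ell\phi}}$ is determined by its restriction to $K$), both it and its inverse map $\tau$-isotypic components onto $\tau$-isotypic components; in particular $P_{\rho_{\sigma,\ell\phi}}^\tau$ fixes $Res_{\ell\phi}^{-1}(f)$ and $P_{\omega_{\sigma,\phi}}^\tau$ fixes $f$ whenever $f\in H_{\omega_{\sigma,\phi}}(\tau)$. Moreover both projections are given by the same formula --- convolution on $K$ with $d_\tau\overline{\chi_\tau}$, each representation restricted to $K$ acting by left translations --- so that $Res_{\ell\phi}\circ P_{\rho_{\sigma,\ell\phi}}^\tau=P_{\omega_{\sigma,\phi}}^\tau\circ Res_{\ell\phi}$ and the two projections agree on any $H_\sigma$-valued function on $K$. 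Feeding these remarks into the definition $\Phi_\pi^\tau(g)=P_\pi^\tau\circ\pi(g)\circ P_\pi^\tau$ identifies the operator $P_{\rho_{\sigma,\ell\phi}}^\tau\circ Res_{\ell\phi}\circ\rho_{\sigma,\ell\phi}(D_\ell(k,x))\circ Res_{\ell\phi}^{-1}$ of \eqref{D R paper 2 reescrita 2} with $Res_{\ell\phi}\circ\Phi_{\rho_{\sigma,\ell\phi}}^\tau(D_\ell(k,x))\circ Res_{\ell\phi}^{-1}$ on $H_{\omega_{\sigma,\phi}}(\tau)$, and $P_{\omega_{\sigma,\phi}}^\tau\circ\omega_{\sigma,\phi}(k,x)$ on $H_{\omega_{\sigma,\phi}}(\tau)$ with $\Phi_{\omega_{\sigma,\phi}}^\tau(k,x)$. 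Hence \eqref{D R paper 2 reescrita 2} becomes, for all $(k,x)\in K\ltimes\mathop{p}$ and $f\in H_{\omega_{\sigma,\phi}}(\tau)$,
\begin{equation*}
\lim_{\ell\to\infty}\Big\|Res_{\ell\phi}\big(\Phi_{\rho_{\sigma,\ell\phi}}^\tau(D_\ell(k,x))(Res_{\ell\phi}^{-1}(f))\big)-\Phi_{\omega_{\sigma,\phi}}^\tau(k,x)(f)\Big\|_{L^2(K,H_\sigma)}=0 .
\end{equation*}

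Finally I would transfer this to $\End(V_\tau)$, exactly as in the passage preceding Theorem \ref{coro}. Fix a $K$-module isomorphism $T:H_{\omega_{\sigma,\phi}}(\tau)\to V_\tau$ and identify $H_{\rho_{\sigma,\ell\phi}}(\tau)$ with $V_\tau$ through $T\circ Res_{\ell\phi}$; by Remark \ref{clases de conj de func esf} these identifications do not change the spherical functions, and they turn $\Phi_{\rho_{\sigma,\ell\phi}}^\tau(D_\ell(k,x))$ into $T\circ Res_{\ell\phi}\circ\Phi_{\rho_{\sigma,\ell\phi}}^\tau(D_\ell(k,x))\circ Res_{\ell\phi}^{-1}\circ T^{-1}\in\End(V_\tau)$ and $\Phi_{\omega_{\sigma,\phi}}^\tau(k,x)$ into $T\circ\Phi_{\omega_{\sigma,\phi}}^\tau(k,x)\circ T^{-1}$. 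Since $V_\tau$ and $H_{\omega_{\sigma,\phi}}(\tau)$ are finite-dimensional, the restriction of the $L^2(K,H_\sigma)$-norm to $H_{\omega_{\sigma,\phi}}(\tau)$ is equivalent to any norm on $V_\tau$, so the displayed limit yields $\lim_{\ell\to\infty}\Phi_{\rho_{\sigma,\ell\phi}}^\tau(D_\ell(k,x))(v)=\Phi_{\omega_{\sigma,\phi}}^\tau(k,x)(v)$ for every $v\in V_\tau$, which is the assertion. The genuinely analytic content --- the convergence itself --- is Theorem \ref{teo dooley 2} of Dooley and Rice, which is merely invoked; so the hard part, such as it is, is pure bookkeeping: keeping the identifications $H_{\rho_{\sigma,\ell\phi}}(\tau)\cong V_\tau\cong H_{\omega_{\sigma,\phi}}(\tau)$ mutually coherent and checking that $Res_{\ell\phi}$ intertwines the $\tau$-projections, which is precisely what Remark \ref{clases de conj de func esf} is there to secure. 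I do not foresee any further obstacle.
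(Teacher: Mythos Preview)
Your proposal is correct and follows essentially the same approach as the paper: the paper's own proof is a single sentence invoking Proposition~\ref{prop con lo de antes}, equation~\eqref{D R paper 2 reescrita 2}, and Remark~\ref{clases de conj de func esf}, and what you have done is spell out the bookkeeping (well-definedness of the $\Phi_{\rho_{\sigma,\ell\phi}}^\tau$, compatibility of $Res_{\ell\phi}$ with the $\tau$-projections, and the passage to $\End(V_\tau)$) that the paper leaves implicit.
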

\begin{proof}
From Proposition \ref{prop con lo de antes}, $(G,K,\tau)$ is also a commutative triple and the proof follows from \eqref{D R paper 2 reescrita 2} and Remark \ref{clases de conj de func esf}.
\end{proof}
\noindent In particular, if we consider $G=\SO_0(n,1)$ the Lorentz group and $K=\SO(n)$, then $K\ltimes \mathop{p}=\M(n)$. The pair $(\SO_0(n,1),\SO(n))$ is a strong Gelfand pair and analogously to the case $(SO(n+1), SO(n))$ we have the following result.
\begin{corollary}
Let $(\tau,V_\tau)\in\widehat{\SO(n)}$ and  let $\Phi^{\tau,\M(n)}$ be a spherical function of type $\tau$ of the strong Gelfand pair $(\M(n),\SO(n))$.
There exists a sequence $\{\Phi_{\ell}^{\tau, \SO_0(n,1)}\}_{\ell\in\Z_{\geq 0}}$ of spherical functions of type $\tau$ of the strong Gelfand pair $(\SO_0(n,1),\SO(n))$ and  a family of contraction maps $\{D_\ell \}_{\ell\in\Z_{\geq 0}}$ between $\M(n)$ and $\SO_0(n,1)$ such that for all $(k,x)\in \M(n)$
\begin{equation*}
\lim_{\ell\rightarrow \infty} \Phi_\ell^{\tau, \SO_0(n,1)}(D_{\ell}(k,x)) =\Phi^{\tau, \M(n)} (k,x),
\end{equation*}
where the convergence is point-wise on  $V_\tau$. 
\end{corollary}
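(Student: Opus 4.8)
The plan is to observe that this corollary is a direct specialization of Theorem \ref{teo dual}, so the proof amounts to verifying that the pair $(\SO_0(n,1),\SO(n))$ satisfies the hypotheses of that theorem and identifying the associated Cartan motion group. First I would recall that $\SO_0(n,1)$ is a connected, non compact semisimple Lie group with finite center and that $\SO(n)$ is a maximal compact subgroup, with the pair $(\SO_0(n,1),\SO(n))$ being a Riemannian symmetric pair of rank one. The Cartan decomposition $\mathfrak{so}(n,1)=\mathfrak{so}(n)\oplus\mathop{p}$ has $\mathop{p}\cong\R^n$ as an $\Ad(\SO(n))$-module (the standard representation), and the bracket structure makes the semidirect product $\SO(n)\ltimes\mathop{p}$ isomorphic to $\M(n)=\SO(n)\ltimes\R^n$. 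Hence the Cartan motion group associated to $(\SO_0(n,1),\SO(n))$ is precisely $\M(n)$.

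Next I would invoke the fact, recalled in the excerpt, that $(\SO_0(n,1),\SO(n))$ is a strong Gelfand pair (references \cite{Camporesi}); equivalently, by Proposition \ref{prop con lo de antes}, $(\M(n),\SO(n))$ is a strong Gelfand pair, which is also stated in the excerpt. In either formulation, for every $(\tau,V_\tau)\in\widehat{\SO(n)}$ the triple $(\M(n),\SO(n),\tau)$ is commutative, so the hypothesis of Theorem \ref{teo dual} is met for every such $\tau$. Given a spherical function $\Phi^{\tau,\M(n)}$ of type $\tau$ of $(\M(n),\SO(n))$, it corresponds to some $\omega\in\widehat{\M(n)}$ containing $\tau$ upon restriction to $\SO(n)$; discarding the representations of zero Plancherel measure (the one-dimensional ones attached to the fixed point $\{0\}$), we may take $\omega=\omega_{\sigma,\phi}$ generic with $\sigma\in\widehat{M}$, $\phi\in\mathfrak{a}^*$, and $\sigma\subset\tau_{|_M}$, as in \eqref{rep generica}.

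Then I would apply Theorem \ref{teo dual} verbatim with $G=\SO_0(n,1)$, $K=\SO(n)$, $K\ltimes\mathop{p}=\M(n)$: it produces the sequence $\{\rho_{\sigma,\ell\phi}\}_{\ell\in\Z_{\geq 0}}$ of principal series representations of $\SO_0(n,1)$, the corresponding spherical functions $\Phi_{\rho_{\sigma,\ell\phi}}^{\tau}:\SO_0(n,1)\longrightarrow\End(V_\tau)$ of type $\tau$ (well-defined since $(\SO_0(n,1),\SO(n),\tau)$ is commutative by Proposition \ref{prop con lo de antes}), and the contraction maps $D_\ell$ of \eqref{contraction 2}, so that $\lim_{\ell\to\infty}\Phi_{\rho_{\sigma,\ell\phi}}^{\tau}(D_\ell(k,x))=\Phi_{\omega_{\sigma,\phi}}^{\tau}(k,x)=\Phi^{\tau,\M(n)}(k,x)$ point-wise on $V_\tau$ for all $(k,x)\in\M(n)$. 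Setting $\Phi_\ell^{\tau,\SO_0(n,1)}:=\Phi_{\rho_{\sigma,\ell\phi}}^{\tau}$ gives the claim. The only genuinely non-routine point is the identification $\SO(n)\ltimes\mathop{p}\cong\M(n)$ together with the compatibility of the contraction maps $D_\ell$ here with those of \eqref{contaction} used in the compact case; everything else is bookkeeping. By Remark \ref{clases de conj de func esf} the statement does not depend on the model chosen for the representations, so no additional care is needed there.
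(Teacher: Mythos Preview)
Your proposal is correct and follows exactly the paper's approach: the corollary is stated immediately after the observation that for $G=\SO_0(n,1)$ and $K=\SO(n)$ the Cartan motion group $K\ltimes\mathop{p}$ is $\M(n)$ and that $(\SO_0(n,1),\SO(n))$ is a strong Gelfand pair, so it is a direct specialization of Theorem~\ref{teo dual}. Your write-up is more detailed than the paper (which gives no separate proof), but the argument is the same.
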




\begin{thebibliography}{X}


\bibitem[BJR]{BJR} C. Benson, J. Jenkins, and G. Ratcliff, \textsl{The Orbit Method and Gelfand pairs, Associated with Nilpotent Lie Groups}, The Journal of Geometric Analysis. \textbf{9}, 569--582 (1990).

\bibitem[Ca]{Camporesi} R. Camporesi, \textsl{The spherical transform for homogeneous vector bundles over Riemannian symmetric spaces}. Journal of Lie Theory \textbf{7}, 29-67 (1997).

\bibitem[Cl]{Clerc} J. L. Clerc, \textsl{Une formule asymptotique du type Mehler-Heine pour les zonales d'un espace reimannien symétrique}. Studia Math. \textbf{57}, 27-32 (1976).


\bibitem[DL]{Nosotros} R. Díaz Martín, and F. Levstein, \textsl{Spherical analysis on homogeneous vector bundles of the 3-dimensional euclidean motion group}. Monatshefte für Mathematik,  \textbf{185}, 621–649 (2018).


\bibitem[DS]{Nosotros2} R. Díaz Martín, and L. Saal, \textsl{Matrix spherical analysis on nilmanifolds}. (2018) \url{https://arxiv.org/abs/1707.09390v2}.

\bibitem[DR1]{Dooley} A. H. Dooley and J. W. Rice, \textsl{Contractions of rotation groups and their representations}. Math. Proc. Camb. Phil. Soc. \textbf{94}, 509-517 (1983).

\bibitem[DR2]{Dooley2} A. H. Dooley and J. W. Rice, \textsl{On contractions of semisimple Lie groups}. Trans. Amer. Math. Soc.
\textbf{289}, 185–202 (1985). 

\bibitem[FH]{Fulton y Harris} W. Fulton, and J. Harris, \textsl{Representations Theory. A first course}. Springer-Verlag, New York (1991).

\bibitem[G]{Godement} R. Godement, \textsl{A theory of spherical functions}. Trans. Amer. Math. Soc.
\textbf{73}, 496–556 (1952).

\bibitem[H]{Helgason} S. Helgason, \textsl{Differential geometry, Lie groups and symmetric spaces}. Academic Press, New York (1978).

\bibitem[IW]{Wigner} E. Inönü, and E. P. Wigner, \textsl{ On the contractions of groups and their representations}. Proc. Nat. Acad. Sci. USA 
\textbf{39}, 510-524 (1953).

\bibitem[K]{Knapp} A. Knapp, \textsl{Lie Groups Beyound and Introduction}, second edition. Birkhäuser. Progress in mathematics, Vol. 140 (2002).

\bibitem[Ma]{Mackey} G. W.Mackey,  \textsl{Induced representations of locally compact groups I}. Annals of Mathematics, 101-139 (1952).

\bibitem[Me]{Mehler} F. G. Mehler, \textsl{Ueber die Vertheilung der statischen Elektricität in einem von zwei Kugelkalotten begrenzten Körper}. Journal für Reine und Angewandte Mathematik, \textbf{68}, 134–150 (1868). 

\bibitem[PTZ]{Ignacio 1} I. Pacharoni, J. Tirao, and I. Zurrián, \textsl{Spherical Functions Associated With the Three Dimensional Sphere}. Annali di Matematica, \textbf{193}, 1727-1778 (2014).

\bibitem[R]{Ricci contraction} F. Ricci, \textsl{A Contraction of $S U (2)$ to the Heisenberg Group}. Monatshefte für Mathematik, Springer-Verlag,  \textbf{101}, 211–225	(1986). 

\bibitem[RS]{Fulvio} F. Ricci, and A. Samanta, \textsl{Spherical analysis on homogeneous vector bundles}. (2016) \url{http://arxiv.org/abs/1604.07301}.

\bibitem[S]{Szego} G. Szegö, \textsl{Orthogonal polynomials}. American Mathematical Society, Colloquium Publications, Vol. XXIII (1975).

\bibitem[TZ]{Ignacio 2} J. A. Tirao, and I. N. Zurrián, \textsl{Spherical Functions: The Spheres Vs. The Projective Spaces}.
 Journal of Lie Theory \textbf{24}, 147-157 (2014).
 
 \bibitem[V]{van Dijk} G. van Dijk, \textsl{Introduction to Harmonic Analysis and Generalized Gelfand Pairs}. de Gruyter Studies in Mathematics, Berlin (2009).

\bibitem[Wal]{Wallach} N. R. Wallach, \textsl{Harmonic Analysis on Homogeneous Spaces}. Marcel Dekker, New York	(1973).

\bibitem[War]{Warner G} G. Warner, \textsl{Harmonic Analysis on
Semi-Simple Lie Groups II}. Springer Verlag (1972).

\end{thebibliography}
\end{document}